\def\R{\mathbb{R}}
\newcommand{\Rmnum}[1]{\expandafter\@slowromancap\romannumeral #1@}
\newcommand{\D}{\displaystyle}
\newtheorem{thm}{Theorem}[section]
\newtheorem{lemma}[thm]{Lemma}
\newtheorem{remark}{Remark}[section]
\newtheorem{theorem}[thm]{Theorem}
\def\O{\Omega}
\def\d{d}
\def\V{V}
\begin{document}

\title{{On the parabolic-elliptic Keller-Segel system with signal-dependent motilities: a paradigm for global boundedness and steady states}}
\author{\sc Zhi-An Wang
\thanks{%
Department of Applied Mathematics, The Hong Kong Polytechnic University, Hung Hom, Hong Kong; mawza@polyu.edu.hk. }}
\date{}
\maketitle

\begin{quote}
\textbf{Abstract}: This paper is concerned with a parabolic-elliptic Keller-Segel system where both diffusive and chemotactic coefficients (motility functions) depend on the chemical signal density. This system was originally proposed by Keller and Segel in \cite{KS-1971-JTB2} to describe the  aggregation phase of  {\it Dictyostelium discoideum}  cells in response to the secreted chemical signal cyclic adenosine monophosphate (cAMP), but the available analytical results are very limited by far.  Considering system in a bounded smooth domain with Neumann boundary conditions, we establish the global boundedness of solutions in any dimensions with suitable general conditions on the signal-dependent motility functions, which are applicable to a wide class of motility functions. The existence/nonexistence of non-constant steady states is studied and abundant  stationary profiles are found. Some open questions are outlined for further pursues. Our results demonstrate that the global boundedness and profile of stationary solutions to the Keller-Segel system with signal-dependent motilities depend on the decay rates of motility functions, space dimensions and the relation between the diffusive and chemotactic motilities, which makes the dynamics immensely wealthy.

\indent \textbf{Keywords}: Keller-Segel model, signal-dependent motility, global boundedness, stationary solutions

\indent \textbf{AMS (2010) Subject Classification}: 35A01, 35B44, 35K57, 35Q92, 92C17
\end{quote}

\numberwithin{equation}{section}

\section{Introduction}
In this paper, we consider the following Keller-Segel (KS) system
\begin{eqnarray}\label{KS}
\begin{cases}
u_t=\nabla \cdot (\gamma(v)\nabla u-u \phi(v)\nabla v), &x\in\Omega, \ t>0, \\
\tau v_t=\d \Delta v+u-v, &x\in\Omega, \ t>0
\end{cases}
\end{eqnarray}
where $\Omega \subset \R^n (n\geq 1)$ is a bounded domain with smooth boundary, $u$ denotes the cell density and $v$ is the concentration of the chemical signal emitted by cells;  $\tau \in \{0,1\}$ and $d>0$ is the chemical diffusion rate; $\gamma(v)>0$ and $\phi(v)$ are diffusive and chemotactic coefficients (called motility functions in the sequel), respectively, both of which depend on the chemical signal concentration.  The system \eqref{KS} was derived by Keller and Segel in \cite{KS-1971-JTB2} to describe the  aggregation phase of  {\it Dictyostelium discoideum} (Dd)  cells in response to the chemical signal cyclic adenosine monophosphate (cAMP) secreted by Dd cells, where the motility functions $\gamma(v)$ and $\phi(v)$ are correlated  through the following relation
\begin{equation}\label{dchi}
\phi(v)=(\alpha-1)\gamma'(v),
\end{equation}
and $\alpha$ denotes the ratio of effective body length (i.e. distance between receptors) to step size. In a special case $\alpha=0$, namely the distance between receptors is zero and the chemotaxis occurs because of an undirected effect on activity due to the presence of a chemical sensed by a single receptor (cf. \cite[p.228]{KS-1971-JTB2}), the system \eqref{KS} is reduced to
\begin{eqnarray}\label{KS1}
\begin{cases}
u_t=\Delta(\gamma(v)u) &x\in\Omega, \ t>0,\\
\tau v_t=\d \Delta v+u-v &x\in\Omega,\ t>0.
\end{cases}
\end{eqnarray}
%

The Keller-Segel system \eqref{KS} with constant $\gamma(v)$ and $\phi(v)$ is called the minimal chemotaxis model (cf. \cite{Nanjundiah}), which has been extensively studied in the past few decades and a vast number of results have been obtained (cf. \cite{Blanchlet, BBTW-M3AS-2015,HP-JMB-2009, H-Review-1, Perthame, WWY} and references therein). In contrast,  the results of \eqref{KS} with non-constant $\gamma(v)$ and $\phi(v)$ are very few and to the best of our knowledge the existing results are available only for the special case $\phi(v)=-\gamma'(v)$, i.e. $\alpha=0$ in \eqref{dchi},  which simplifies the Keller-Segel system \eqref{KS} into \eqref{KS1}. Recently to describe the stripe pattern formation observed in the experiment of \cite{Liu-Science},  a so-called density-suppressed motility model was proposed in \cite{Fu-PRL-2011} as follows
\begin{equation}\label{KSN}
\begin{cases}
u_t=\Delta(\gamma(v)u)+\mu u(1-u),&x\in\Omega,\ t>0,\\
\tau v_t=\d\Delta v+u-v,&x\in\Omega,\ t>0.
\end{cases}
\end{equation}
with $\gamma'(v)<0$ and $\mu\geq 0$ denotes the intrinsic cell growth rate. Clearly the density-suppressed motility model \eqref{KSN} with $\mu=0$ coincides with the simplified KS model \eqref{KS1}. Indeed the density-suppressed motility has been previously used in the predator-prey system to describe the inhomogeneous co-existence distributions of ladybugs (predators) and
aphids (prey) populations in the field (see \cite{Kareiva} for modeling and \cite{JWEJAM} for mathematical analysis).

When the Neumann boundary conditions are imposed, namely $\partial_{\nu}u|_{\partial \Omega}=\partial_{\nu} v|_{\partial \Omega}=0$ where $\partial_\nu=\frac{\partial}{\partial \nu}$  with $\nu$ denoting the unit outward normal vector of $\partial \Omega$, there are some results available to \eqref{KS1} and \eqref{KSN}. For the system \eqref{KS1}, it was shown that globally bounded solutions exist in two dimensional spaces if the motility function $\gamma(v)\in C^3{([0,\infty)}\cap W^{1,\infty}(0,\infty))$ has both positive lower and upper bounds. It appears that this uniform boundedness assumption on $\gamma(v)$ is unnecessary for the global boundedness of solutions. For example, if $\gamma(v)=\frac{c_0}{v^k}$ (i.e. $\gamma(v)$ decays algebraically), it was proved in \cite{YK-AAM-2017} that global bounded solutions exist in all dimensions provided $c_0>0$ is small enough. Recently the global existence result was extended to the parabolic-elliptic case model (i.e. system \eqref{KS1} with $\tau=0$) in \cite{AY-Nonlinearity-2019} for any $0<k<\frac{2}{n-2}$ and $c_0>0$. When $\gamma(v)=\exp(-\chi v)$, a critical mass phenomenon has been shown to exist in \cite{Jin-Wang-PAMS} in two dimensions:  if $n=2$, there is a critical number $m=4\pi/\chi>0$ such that the solution of \eqref{KS1} with $\tau=d=1$ may blow up if the initial cell mass $\|u_0\|_{L^1(\O)}>m$ while global bounded solutions exist if $\|u_0\|_{L^1(\O)}<m$. This result was further refined in \cite{FujieJiang2020-2} showing that the blowup occurs at the infinity time.
For the system \eqref{KSN} with logistic growth (i.e. $\sigma>0$), the blowup in two dimensions was ruled out for a large class of motility function $\gamma(v)$.   Precisely, it is shown in \cite{JKW-SIAP-2018} that the system \eqref{KSN} has a unique global classical solution in two dimensional spaces if $\gamma(v)$ satisfies the following:
$\gamma(v)\in C^3([0,\infty)), \gamma(v)>0\ \  \mathrm{and}~\gamma'(v)<0 \ \ \mathrm{on} ~[0,\infty)$, $\lim\limits_{v \to \infty}\gamma(v)=0$ and $\lim\limits_{v \to \infty}\frac{\gamma'(v)}{\gamma(v)}$ exists. Moreover, the constant steady state $(1,1)$ of \eqref{KSN} is proved to be globally asymptotically stable if
$
\mu>\frac{K_0}{16}$ where $ K_0=\max\limits_{0\leq v \leq \infty}\frac{|\gamma'(v)|^2}{\gamma(v)}$. Recently, similar results have been extended to higher dimensions ($n\geq 3$) for large $\mu>0$ in \cite{WW} and to more relaxed conditions on $\gamma(v)$ in \cite{FujieJiang2020-1}. On the other hand, for small $\mu>0$, the existence/{\color{black}nonexistence} of nonconstant steady states  of \eqref{KSN} was rigorously established under some constraints on the parameters in \cite{MPW-PD-2019} and the periodic pulsating wave is analytically obtained by the multi-scale analysis.    When $\gamma(v)$ is a constant step-wise function, the dynamics of discontinuity interface was studied in \cite{SIK-EJAP-2019}.

By far, as recalled above, the study of the original Keller-Segel system \eqref{KS} was confined to the special case $\alpha=0$ (cf. \cite{YK-AAM-2017, AY-Nonlinearity-2019,Jin-Wang-PAMS}), namely the reduced system \eqref{KS1}, for some special form of $\gamma(v)$. The results for the case of $\alpha\ne 0$ remains entirely unknown. The objective of this paper is to establish the global boundedness of solutions to \eqref{KS} with suitable conditions on $\gamma(v)$ and $\phi(v)$ by keeping them as general as possible, and then apply the results a variety of $\gamma(v)$ and $\phi(v)$ including but beyond the relation \eqref{dchi}. As first step, we consider the parabolic-elliptic case of \eqref{KS}. That is, we consider the following problem
\begin{eqnarray}\label{KS-N}
\begin{cases}
u_t=\nabla \cdot (\gamma(v)\nabla u-u \phi(v)\nabla v), &x\in\Omega, \ t>0, \\
0=\d\Delta v+u-v, &x\in\Omega, \ t>0,\\
\partial_{\nu}u=\partial_{\nu} v=0 &x\in \partial \Omega,\\
u(x,0)=u_0 (x), &x\in \Omega.
\end{cases}
\end{eqnarray}
Except providing a general global boundedness result (see Theorem \ref{BS}), in this paper we develop a framework leading to the global boundedness of solutions by fully capturing the parabolic-elliptic structure to constructing a positive-definite quadratic form for gradients $\nabla u$ and $\nabla v$ to achieve necessary regularity/estimates (see Lemma \ref{priori}).  Although it is yet to be confirmed whether the results of \eqref{KS-N} can be wholly or partially carried over to the parabolic-parabolic case model of \eqref{KS} (i.e. $\tau=1$), they will be very instructive for the study of the parabolic-parabolic Keller-Segel model \eqref{KS} in the future.

The rest of this paper is organized as follows. In section 2, we state our main results and give some remarks on the implications/applications of our results. In section 3, we present the proof of our main results. The stationary solutions will be discussed in section 4. In final section 5, we shall summarize our results and outline a number of interesting questions open for further pursues.

\section{Statement of main results}
In this section, we shall state a general global existence result and present several specific applications. For the  motility functions $\gamma(v)$ and $\phi(v)$, we prescribe the following hypotheses
\begin{itemize}
\item[(H1)] $\gamma(v) \in C^2([0,\infty))$ and $\gamma(v)> 0$ for all $v \in [0,\infty)$.
\vspace{0.1cm}

\item[(H2)]
\begin{enumerate}
\item[(a)] $\phi(v) \in C^2([0,\infty))$, $\phi(v)\geq 0$ and $\phi'(v)<0$ for $v\in [0,\infty)$;
\item[(b)] $\lim\limits_{v \to \infty}v\phi(v)<\infty$ if $n>3$.
\end{enumerate}
\vspace{0.1cm}

\item[(H3)] $\inf\limits_{v\geq 0} \D\frac{\gamma(v) |\phi'(v)|}{|\phi(v)|^2}>\frac{n}{2}$.
\end{itemize}

The conditions (H1) and (H2) give the basic requirement on $\gamma(v)$ and $\phi(v)$, respectively, and (H3) imposes the constraint on the relation between $\gamma(v)$ and $\phi(v)$.  Note that the monotonicity of $\gamma(v)$ is not required, this is different from the existing results in \cite{YK-AAM-2017, AY-Nonlinearity-2019, Jin-Wang-PAMS}.

In the sequel, we say that $(u,v)$ is a classical solution to (\ref{KS-N}) in $\bar{\O}\times [0, T)$ for some $T\in(0,\infty]$ iff
$$u\in C(\bar{\Omega}\times[0,T))\cap C^{2,1}(\bar{\Omega}\times(0,T)), \ v \in C^{2,1}(\bar{\Omega}\times(0,T))$$
and $(u,v)$ satisfies equations \eqref{KS-N} pointwise.
Then our main results are stated in the following theorems.

\begin{theorem}\label{BS}
Let $\Omega\subset \R^n (n\geq 1)$ be a  bounded domain with smooth boundary. Assume $u_0  \gneqq 0$ and $u_0 \in W^{1,\infty}(\O)$. If one of the following holds

\begin{itemize}
\item[(i)] $n=1$, $\gamma(v)$ and $\phi(v)$ satisfy hypotheses (H1) and (H2)-(a);
\item[(ii)] $n\geq 2$, $\gamma(v)$ and $\phi(v)$ satisfy hypotheses (H1)-(H3) such that
\begin{equation}\label{condition}
\int_\O \phi(v)^{-p}dx<\infty \ \ \mathrm{for \ some} \ \ p>\frac{n}{2} \ \mathrm{and \ any} \ t>0,
\end{equation}
\end{itemize}
then the system (\ref{KS-N}) admits a unique classical solution
$(u,v) \in \bar{\O}\times [0,\infty)$ satisfying
\begin{equation}\label{bound}
\|u(\cdot,t)\|_{L^\infty}+\|v(\cdot,t)\|_{W^{1,\infty}}\leq C_0,
\end{equation}
where $C_0$ is a constant independent of $t$.
\end{theorem}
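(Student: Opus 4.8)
The plan is to establish \eqref{bound} by a standard local-existence-plus-extensibility argument combined with uniform-in-time a priori estimates. First I would invoke a local existence theory for quasilinear parabolic systems (fixed-point / Amann-type argument, using (H1)--(H2) for regularity of $\gamma,\phi$) to obtain a maximal existence time $T_{\max}\in(0,\infty]$ together with the extensibility criterion: if $T_{\max}<\infty$ then $\|u(\cdot,t)\|_{L^\infty}\to\infty$ as $t\uparrow T_{\max}$. Along the way I would record the two elementary conserved/controlled quantities coming from the elliptic equation $0=d\Delta v+u-v$: mass conservation $\int_\Omega u(\cdot,t)=\int_\Omega u_0=:M$ (integrate the first equation, use the Neumann condition), hence $\int_\Omega v(\cdot,t)=M$ as well, and positivity $u>0$, $v>0$ by the maximum principle. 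So the whole game is to upgrade the $L^1$ bound on $u$ to an $L^\infty$ bound.

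The core is Lemma \ref{priori}, which the excerpt tells us constructs a positive-definite quadratic form in $\nabla u$ and $\nabla v$; I would use it as follows. Test the $u$-equation against a suitable function — the natural choice is to differentiate a functional like $\int_\Omega u^{q}\,dx$ or, better adapted to the structure, $\int_\Omega F(v) u^{q}$ or an energy of the form $\int_\Omega g(v)|\nabla v|^2 + \int_\Omega u\log u$ — and use the elliptic equation to substitute $\Delta v = (v-u)/d$ wherever second derivatives of $v$ appear. The resulting differential inequality will contain a "bad" cross term with $\nabla u\cdot\nabla v$ and a coefficient involving $\phi(v)$, balanced against "good" dissipation terms $\gamma(v)|\nabla u|^2$ and a term from the elliptic equation; hypothesis (H3), $\inf_v \gamma(v)|\phi'(v)|/|\phi(v)|^2 > n/2$, is exactly the algebraic condition that makes the relevant $2\times2$ (or $3\times3$) matrix in $(\nabla u,\nabla v)$ positive definite, so that after completing the square one is left with a coercive dissipation controlling $\int_\Omega |\nabla u|^2$ (or $\int |\nabla (u^{q/2})|^2$) up to lower-order terms. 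I would then feed in the integrability hypothesis \eqref{condition}, $\int_\Omega \phi(v)^{-p}<\infty$ with $p>n/2$: this, via Hölder, converts the $\phi(v)$-weighted dissipation into an unweighted $\|u\|_{L^{q'}}$-type quantity with $q'$ large enough to beat the growth terms through the Gagliardo--Nirenberg inequality, yielding a bound on $\|u(\cdot,t)\|_{L^q}$ for arbitrarily large finite $q$, uniform in $t$.

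From a uniform $L^q$ bound on $u$ with $q>n/2$ (in fact $q>n$), standard elliptic regularity applied to $0=d\Delta v + u - v$ gives $\|v(\cdot,t)\|_{W^{2,q}}\le C$, hence $\|v(\cdot,t)\|_{W^{1,\infty}}\le C$ by Sobolev embedding, which already settles the $v$-part of \eqref{bound}. For $\|u\|_{L^\infty}$ I would then run a Moser--Alikakos iteration on the $u$-equation — now legitimate because $\gamma(v)$, $\phi(v)$, $|\nabla v|$ are all bounded on the relevant range of $v$ by the estimates just obtained, so the first equation is uniformly parabolic with bounded drift — to pass from $L^q$ to $L^\infty$ with a time-independent constant; alternatively a semigroup / variation-of-constants estimate on $u_t = \nabla\cdot(\gamma(v)\nabla u) - \nabla\cdot(u\phi(v)\nabla v)$ using the boundedness of the drift term does the same. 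This uniform $L^\infty$ bound contradicts the blow-up alternative, so $T_{\max}=\infty$, and tracking constants gives \eqref{bound}. Case (i), $n=1$, is easier: here the one-dimensional Sobolev embedding $W^{1,2}\hookrightarrow L^\infty$ and the $L^1$ bound on $v$ make the $\phi(v)$-terms automatically manageable without (H3) or \eqref{condition}, so a simpler energy estimate (e.g. on $\int u^2 + \int|\nabla v|^2$) closes directly.

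The main obstacle I anticipate is making the quadratic-form argument genuinely uniform in time rather than just local: the cross term $\int u\,\phi(v)\nabla u\cdot\nabla v$ must be absorbed by dissipation with a margin that does not degrade as $v$ ranges over $[0,\infty)$, and this is precisely why (H3) is stated as an infimum over all $v\ge0$ and why the extra decay/boundedness clause (H2)(b) ($\lim_{v\to\infty}v\phi(v)<\infty$ for $n>3$) is needed — it controls the borderline term that the quadratic form alone cannot handle in high dimensions. Verifying that the constants produced by Gagliardo--Nirenberg and by \eqref{condition} combine to give a closed, $t$-independent Grönwall-type inequality (as opposed to one that only prevents blow-up on finite intervals) is the delicate bookkeeping step, and is, I expect, the technical heart of Lemma \ref{priori}.
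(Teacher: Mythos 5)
Your overall strategy coincides with the paper's: local existence with the $L^\infty$ extensibility criterion, a differential inequality for $\int_\Omega u^p\,dx$ obtained by testing the first equation with $u^{p-1}$ and the elliptic equation with a multiple of $u^p\phi(v)$, nonnegativity of the resulting quadratic form in $u^{p/2-1}\nabla u$ and $u^{p/2}\nabla v$ guaranteed exactly by (H3), a Young-type absorption producing the $\int_\Omega\phi(v)^{-p}dx$ term that \eqref{condition} keeps bounded, elliptic regularity for $v$, and a Moser--Alikakos iteration at the end. This is precisely the content of the paper's Lemma \ref{priori} combined with Lemma \ref{criterion}.

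There is, however, one step in your chain that fails as written. You claim the quadratic-form argument yields $\|u(\cdot,t)\|_{L^q}\le C$ for \emph{arbitrarily large} finite $q$, and you then take $q>n$ to get $\|v\|_{W^{1,\infty}}$ via $W^{2,q}\hookrightarrow C^1$. But nonnegativity of the form $A|\vec{z}_1|^2+2B\vec{z}_1\cdot\vec{z}_2+C|\vec{z}_2|^2$ requires $p\le\inf_{v\ge0}\gamma(v)|\phi'(v)|/|\phi(v)|^2$, and (H3) only guarantees that this infimum exceeds $n/2$; it may well be smaller than $n$, so the exponent reachable by this route is capped just above $n/2$ and your chain does not close. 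The paper bridges the gap with a separate bootstrap (Lemma \ref{criterion}): an $L^p$ bound with $p>n/2$ already gives $\|v\|_{L^\infty}\le C$ by $W^{2,p}\hookrightarrow L^\infty$; together with the a priori lower bound $v\ge\eta>0$ this confines $v$ to a compact interval on which $\gamma(v)\ge c>0$ and $\phi(v)\le C$, and from that point the standard $L^q$ testing (no quadratic form, just Young plus Gagliardo--Nirenberg) raises the integrability to any $q>n$, after which your elliptic-regularity and Moser steps go through. Two smaller points: to make the Gr\"onwall inequality give a $t$-independent bound one needs $\sup_{t}\int_\Omega\phi(v(\cdot,t))^{-p}dx<\infty$, i.e. \eqref{condition} with a constant uniform in $t$, not merely finiteness at each time; and your $n=1$ argument is more elaborate than necessary, since mass conservation already gives \eqref{Lp} with $p=1>n/2$, so the boundedness criterion applies directly without any energy estimate.
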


While assumptions (H1)-(H2) cover a wide range of motility functions $\gamma(v)$ and $\phi(v)$, we note that the global boundedness of solutions in one dimension ($n=1$) does not need the hypotheses (H2)-(b), (H3) and \eqref{condition} which comprise the main structural constraints on $\gamma(v)$ and $\phi(v)$ in multi-dimensions.
 If $\gamma(v)$ and $\phi(v)$ are explicitly given, the conditions (H3) and \eqref{condition} can be specified.
 Since the multi-dimensional problem is genuinely interesting in real world, below we assume $n\geq 2$ and explore the applications of Theorem \ref{BS} for motility functions with algebraic or exponential decay.

Before proceeding, we note by the integration of the first equation of \eqref{KS} that
\begin{equation}\label{L1}
\|u(t)\|_{L^1(\O)}=\|u_0\|_{L^1(\O)}:=m\ \mathrm{for\ all} \ t>0
\end{equation}
which indicates that the cell mass is conserved. Furthermore the local existence of classical solutions of \eqref{KS-N} can be obtained under hypotheses (H1) and (H2-(a)) only (see Lemma \ref{local}). Then from a known result of \cite[Corollary 2.3]{AY-Nonlinearity-2019} (see also \cite{F-JMAA-2015}), there is a positive constant $C(n, \Omega)>0$ such that
\begin{equation}\label{lower}
\inf _{x \in \Omega} v(x, t) \geqslant \eta, \ \mathrm{for \ all} \ 0<t<T_{\max}
\end{equation}
holds for a maximal existence time $T_{\max}\in (0, \infty]$, where $\eta=C(n, \Omega)\|u_0\|_{L^1(\O)}$. That is, the existence of priori positive number $\eta$ can be obtained under the hypotheses (H1)-(H2) without imposing  other conditions. Keeping this in mind, we consider the following two classes of motility functions $\gamma(v)$ and $\phi(v)$
\begin{equation}\tag{I}\label{rates1n}
\gamma(v)=\frac{\sigma_1}{v^{\lambda_1}},\ \ \phi(v)=\frac{\sigma_2}{v^{\lambda_2}}, \ \sigma_1, \sigma_2>0, \ \lambda_1>0, \lambda_2>1
\end{equation}
and
\begin{equation}\tag{II}\label{rates2n}
\gamma(v)=\exp(-\chi_1 v),\ \ \phi(v)=\delta \exp(-\chi_2 v), \ \ \ \chi_1>0, \chi_2>0, \delta>0.
\end{equation}

Then we have the following results.
\begin{theorem}\label{BSN}
Let $\Omega\subset \R^n (n\geq 2)$ be a  bounded domain with smooth boundary, and assume $u_0  \gneqq 0$ with $u_0 \in W^{1,\infty}(\O)$.  Then the system  \eqref{KS-N}  admits a unique classical solution $(u,v)$ in $\bar{\O}\times [0,\infty)$ satisfying \eqref{bound} if
\begin{itemize}
\item $\gamma(v)$ and $\phi(v)$ are given by \eqref{rates1n} with
\begin{equation}\label{con1}
\lambda_2\geq \lambda_1+1 \ \mathrm{and}\ \min\Big\{\frac{\lambda_2}{\lambda_2-1}, \frac{\sigma_1 \lambda_2}{\sigma_2} \eta^{\lambda_2-\lambda_1-1}\Big\}>\frac{n}{2}
\end{equation}
or
\item   $\gamma(v)$ and $\phi(v)$ given by \eqref{rates2n} with $n=2$ such that
\begin{equation}\label{con2}
\chi_2\geq \chi_1 \ \ \mathrm{and} \ \ \frac{n\delta}{2} \exp\{(\chi_1-\chi_2)\eta\}<\chi_2 <\frac{4\pi d}{m}.
\end{equation}
\end{itemize}
\end{theorem}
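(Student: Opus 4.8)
The plan is to prove Theorem~\ref{BSN} by simply verifying, for each of the two explicit families \eqref{rates1n} and \eqref{rates2n}, that every hypothesis required in part (ii) of Theorem~\ref{BS} --- namely (H1), (H2), (H3) and \eqref{condition} --- is met, and then invoking Theorem~\ref{BS} (which delivers uniqueness and the $t$-independent bound \eqref{bound}). The single structural input that makes this a routine check is the a priori positive lower bound $v\geq\eta$, $\eta=C(n,\Omega)m$, from \eqref{lower}: it is available as soon as (H1) and (H2)-(a) give local existence, and it is exactly why the thresholds in \eqref{con1}--\eqref{con2} are phrased in terms of $\eta$. For the same reason I would first dispose of the fact that the algebraic motilities in \eqref{rates1n} are singular at $v=0$ by replacing $\gamma$ and $\phi$ there by strictly positive $C^2([0,\infty))$ extensions agreeing with $\sigma_1 v^{-\lambda_1}$, $\sigma_2 v^{-\lambda_2}$ on $[\eta,\infty)$ and keeping $\phi\geq 0$, $\phi'<0$ everywhere; this changes nothing about the solution of \eqref{KS-N} and reduces the infimum in (H3) to one over $v\in[\eta,\infty)$.

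For the family \eqref{rates1n} I would compute, for $v>0$,
\[
\frac{\gamma(v)\,|\phi'(v)|}{|\phi(v)|^{2}}
=\frac{\sigma_1 v^{-\lambda_1}\cdot\lambda_2\sigma_2 v^{-\lambda_2-1}}{\sigma_2^{2}v^{-2\lambda_2}}
=\frac{\sigma_1\lambda_2}{\sigma_2}\,v^{\lambda_2-\lambda_1-1}.
\]
Since $\lambda_2\geq\lambda_1+1$ makes the exponent nonnegative, this is non-decreasing on $[\eta,\infty)$, so its infimum there is $\tfrac{\sigma_1\lambda_2}{\sigma_2}\eta^{\lambda_2-\lambda_1-1}$, and (H3) holds precisely when that quantity exceeds $n/2$ --- the second bound in \eqref{con1}. (If instead $\lambda_2<\lambda_1+1$, the ratio decays to $0$ as $v\to\infty$ and (H3) fails, so $\lambda_2\geq\lambda_1+1$ is forced.) Hypothesis (H2)-(b) is immediate from $v\phi(v)=\sigma_2 v^{1-\lambda_2}\to 0$ since $\lambda_2>1$. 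For \eqref{condition}, $\int_\Omega\phi(v)^{-p}\,dx=\sigma_2^{-p}\int_\Omega v^{p\lambda_2}\,dx$, so I would need $v\in L^{p\lambda_2}(\Omega)$ uniformly in $t$ for some $p>n/2$; this follows from the a priori $L^q$-integrability of $v$ obtained by applying standard elliptic estimates to $0=d\Delta v+u-v$ together with the conserved mass $\|u(\cdot,t)\|_{L^1}=m$ (cf. \eqref{L1} and \cite{AY-Nonlinearity-2019}), and the range of admissible $p$ is exactly what the first bound in \eqref{con1}, $\tfrac{\lambda_2}{\lambda_2-1}>\tfrac n2$, records (for $n=2$ this is vacuous, consistent with $v\in L^q(\Omega)$ for all $q<\infty$). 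Part (ii) of Theorem~\ref{BS} then gives the conclusion.

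For the family \eqref{rates2n} with $n=2$, (H1) and (H2)-(a) are visibly satisfied ($\gamma=e^{-\chi_1 v}>0$, $\phi=\delta e^{-\chi_2 v}>0$, $\phi'=-\delta\chi_2 e^{-\chi_2 v}<0$) and (H2)-(b) is not required. I would compute
\[
\frac{\gamma(v)\,|\phi'(v)|}{|\phi(v)|^{2}}
=\frac{e^{-\chi_1 v}\cdot\delta\chi_2 e^{-\chi_2 v}}{\delta^{2}e^{-2\chi_2 v}}
=\frac{\chi_2}{\delta}\,e^{(\chi_2-\chi_1)v},
\]
which under $\chi_2\geq\chi_1$ is non-decreasing, so its infimum over $[\eta,\infty)$ is $\tfrac{\chi_2}{\delta}e^{(\chi_2-\chi_1)\eta}$, and this exceeds $n/2$ exactly when $\tfrac{n\delta}{2}e^{(\chi_1-\chi_2)\eta}<\chi_2$ --- the first inequality of \eqref{con2}, giving (H3). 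For \eqref{condition} I would need $\int_\Omega\phi(v)^{-p}\,dx=\delta^{-p}\int_\Omega e^{p\chi_2 v}\,dx<\infty$ uniformly in $t$ for some $p>1$; in two dimensions this is furnished by the Moser--Trudinger / Brezis--Merle type estimate for $0=d\Delta v+u-v$ with $\|u\|_{L^1}=m$ --- $\int_\Omega e^{\kappa v}\,dx$ is bounded uniformly in $t$ whenever $\kappa<4\pi d/m$ (cf. \cite{Jin-Wang-PAMS}) --- so, choosing $\kappa=p\chi_2$ slightly above $\chi_2$, exactly the condition $\chi_2<4\pi d/m$ is needed, the last inequality of \eqref{con2}. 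This also explains why the exponential family is restricted to $n=2$: for $n\geq 3$ an $L^1$ bound on $u$ cannot yield exponential integrability of $v$, so \eqref{condition} would fail for $\phi=\delta e^{-\chi_2 v}$.

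The only genuinely non-routine step is the verification of \eqref{condition}; everything else is reading off $\gamma|\phi'|/|\phi|^2$ and using $v\geq\eta$. The obstacle there is to have at hand the sharp a priori integrability of $v$ inherited from the elliptic equation and mass conservation --- polynomial $L^q$ integrability in the algebraic case, and the two-dimensional exponential (Moser--Trudinger) integrability with its $4\pi d/m$ threshold in the exponential case --- which is precisely what ties the explicit thresholds in \eqref{con1} and \eqref{con2} to the data $m$, $\eta$, $d$.
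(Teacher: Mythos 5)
Your verification of (H3) for both families, your treatment of the exponential case via the Brezis--Merle/Trudinger--Moser bound with the threshold $\chi_2 p<4\pi d/m$, and your (commendably explicit) fix of the singularity of \eqref{rates1n} at $v=0$ all match the paper's argument. But there is a genuine gap in your verification of \eqref{condition} for the algebraic family when $n>2$. You assert that the uniform $L^q$-integrability of $v$ coming from $\|u\|_{L^1}=m$ and elliptic theory covers exactly the range $\frac{\lambda_2}{\lambda_2-1}>\frac n2$, i.e.\ $\lambda_2<\frac{n}{n-2}$. It does not: Lemma \ref{regularity} only gives $v\in L^r(\O)$ for $r<\frac{n}{n-2}$, so $\int_\O v^{\lambda_2 p}\,dx<\infty$ with some $p>\frac n2$ forces $\lambda_2\cdot\frac n2<\frac{n}{n-2}$, i.e.\ $\lambda_2<\frac{2}{n-2}$, which is strictly smaller than the range $\lambda_2<\frac{n}{n-2}$ claimed in \eqref{con1} (for $n=3$, your argument covers $\lambda_2<2$ while the theorem asserts boundedness up to $\lambda_2<3$). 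This missing range is not a fringe case: it is exactly what is needed for Theorem \ref{BSNN}(i), where $\lambda_2=\lambda+1$ can lie in $[\frac{2}{n-2},\frac{n}{n-2})$.

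The paper closes this range not by verifying \eqref{condition} as a uniform-in-$t$ bound, but by interpolating
$\int_\O v^{\lambda_2 p}\,dx\le \|v\|_{L^q(\O)}^{q}\,\|v\|_{L^\infty(\O)}^{\theta}$ with $\frac n2(\lambda_2-1)\le q<\frac{n}{n-2}$ and $\theta=\lambda_2 p-q<p$, then using $\|v\|_{L^\infty(\O)}\le C\|u\|_{L^p(\O)}$ (elliptic regularity plus Sobolev, valid since $p>\frac n2$) and Young's inequality to get $\int_\O\phi(v)^{-p}\,dx\le C+\frac12\|u\|_{L^p(\O)}^p$, which is then \emph{absorbed} into the left-hand side of the differential inequality \eqref{pei}. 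In other words, for $\lambda_2\ge\frac{2}{n-2}$ the theorem is not a direct corollary of Theorem \ref{BS}; one must re-enter Lemma \ref{priori} and close the Gronwall argument with the $u$-dependent bound --- precisely the point of Remark 2.1. Your proposal needs this additional interpolation-and-absorption step (or an equivalent device) to cover the full range of \eqref{con1}; as written, the reduction to ``check (H1)--(H3) and \eqref{condition}, then cite Theorem \ref{BS}'' fails there.
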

\begin{remark}\em{
We should remark that the results of Theorem \ref{BSN} are not simple applications of Theorem \ref{BS}. Indeed the validation of the key condition \eqref{condition} is quite technical and a lot additional efforts are needed depending on the specific form of $\phi(v)$ (see section 3.3).
}
\end{remark}
Note that the conditions $\lambda_2\geq \lambda_1+1$, $\frac{\sigma_1 \lambda_2}{\sigma_2} \eta^{\lambda_2-\lambda_1-1}>\frac{n}{2}$ in \eqref{con1} and conditions in \eqref{con2} stem from the hypothesis (H3), while the condition \eqref{condition} leads to $\frac{\lambda_2}{\lambda_2-1}>\frac{n}{2}$ in \eqref{con1} and requirement $n=2$ for \eqref{rates2n}.
Next we further explore the application of results in Theorem \ref{BSN} to the relation \eqref{dchi} originally derived by Keller and Segel in \cite{KS-1971-JTB2}.

\begin{theorem}\label{BSNN}
Let $\Omega\subset \R^n (n\geq 2)$ be a  bounded domain with smooth boundary and $u_0 \gneqq 0$ with  $u_0 \in W^{1,\infty}(\O)$. If $\gamma(v)$ and $\phi(v)$ satisfy the relation \eqref{dchi} with $\alpha<1$ and one of the following assumptions holds
\begin{itemize}
\item[(i)] $\gamma(v)=\frac{\sigma}{v^\lambda}$ ($\sigma>0$) such that
\begin{equation}\label{gcon}
0<\lambda< \begin{cases} \frac{2}{n-2} &\ \mathrm{if} \ 0\leq \alpha<1 \\
\frac{2}{n(1-\alpha)-2} &\ \mathrm{if} \ \alpha<0;  \end{cases}
\end{equation}
\item[(ii)] $\gamma(v)=\exp(-\chi v)$ with $n=2, \ \chi<\frac{4\pi d}{m}$ and  $0<\alpha<1$;
\end{itemize}
then the system \eqref{KS-N}  has a unique classical solution $(u,v)$ in $\bar{\O}\times [0,\infty)$ satisfying \eqref{bound}.
\end{theorem}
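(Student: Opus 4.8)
The plan is to reduce Theorem~\ref{BSNN} to Theorem~\ref{BSN}. Once the Keller--Segel relation \eqref{dchi} is substituted into the two prescribed forms of $\gamma(v)$, the resulting pair $(\gamma,\phi)$ falls into the class \eqref{rates1n} or \eqref{rates2n}, and it only remains to check that the parameters so produced meet \eqref{con1} or \eqref{con2}; the hypotheses (H1)--(H3) and condition \eqref{condition} are then taken care of inside the proof of Theorem~\ref{BSN}. No additional PDE estimate is needed --- the argument is pure bookkeeping of exponents and constants.

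For part (i) I would write $\gamma(v)=\sigma v^{-\lambda}$ with $\sigma>0$, $\lambda>0$, so that $\gamma'(v)=-\sigma\lambda v^{-(\lambda+1)}$ and hence, by \eqref{dchi},
\[
\phi(v)=(\alpha-1)\gamma'(v)=(1-\alpha)\sigma\lambda\, v^{-(\lambda+1)}.
\]
Because $\alpha<1$, this has the form \eqref{rates1n} with $\sigma_1=\sigma$, $\lambda_1=\lambda$, $\sigma_2=(1-\alpha)\sigma\lambda>0$ and $\lambda_2=\lambda+1>1$. Two structural features make the check clean: $\lambda_2=\lambda_1+1$, so the first requirement of \eqref{con1} is satisfied with equality; and the exponent $\lambda_2-\lambda_1-1$ vanishes, so the $\eta$-dependent factor in \eqref{con1} collapses to $1$. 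Thus \eqref{con1} becomes
\[
\min\Big\{\tfrac{\lambda+1}{\lambda},\ \tfrac{\lambda+1}{(1-\alpha)\lambda}\Big\}>\tfrac n2 .
\]
The plan is then to split on the sign of $\alpha$: for $0\le\alpha<1$ one has $1-\alpha\le1$, the minimum equals $(\lambda+1)/\lambda$, and the inequality is equivalent to $\lambda<2/(n-2)$ (read as $\lambda>0$ when $n=2$); for $\alpha<0$ one has $1-\alpha>1$, the minimum equals $(\lambda+1)/((1-\alpha)\lambda)$, and the inequality is equivalent to $\lambda<2/(n(1-\alpha)-2)$, the denominator being positive for every $n\ge2$. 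These are exactly the two ranges in \eqref{gcon}, so Theorem~\ref{BSN} delivers the global bounded solution; as a consistency check, $\alpha=0$ recovers the range $0<\lambda<2/(n-2)$ known from \cite{AY-Nonlinearity-2019}.

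For part (ii) I would take $\gamma(v)=\exp(-\chi v)$ with $n=2$ and $\chi>0$, so that $\gamma'(v)=-\chi\exp(-\chi v)$ and \eqref{dchi} yields
\[
\phi(v)=(1-\alpha)\chi\exp(-\chi v),
\]
which has the form \eqref{rates2n} with $\chi_1=\chi_2=\chi$ and $\delta=(1-\alpha)\chi>0$ (using $0<\alpha<1$). Now $\chi_1=\chi_2$ makes the first inequality of \eqref{con2} hold with equality and turns the factor $\exp\{(\chi_1-\chi_2)\eta\}$ into $1$, so the middle inequality of \eqref{con2} becomes $\tfrac{n\delta}{2}=(1-\alpha)\chi<\chi=\chi_2$, i.e.\ $\alpha>0$, which is assumed, while the last inequality is exactly the hypothesis $\chi<4\pi d/m$. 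Hence \eqref{con2} holds and Theorem~\ref{BSN} again applies.

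Since all of the analytic content has already been absorbed into Theorem~\ref{BSN}, I do not expect a genuine obstacle. The only points deserving a moment's care are, first, that the two equality cases $\lambda_2=\lambda_1+1$ and $\chi_1=\chi_2$ land precisely on the relaxed boundary $\ge$ allowed in \eqref{con1} and \eqref{con2}, and second, that one must treat separately the sub-cases $\alpha\ge0$ and $\alpha<0$ (respectively $\alpha>0$) when deciding which term realizes the minimum in \eqref{con1} --- this is what produces the two-branch formula \eqref{gcon}.
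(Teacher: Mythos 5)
Your proposal is correct and follows essentially the same route as the paper: substitute \eqref{dchi} into the algebraic/exponential forms of $\gamma$, identify the parameters $\lambda_2=\lambda_1+1$, $\sigma_2=(1-\alpha)\sigma\lambda$ (resp.\ $\chi_1=\chi_2=\chi$, $\delta=(1-\alpha)\chi$), note that the $\eta$-dependent factors collapse, and reduce \eqref{con1}, \eqref{con2} to the stated exponent ranges via the same case split on the sign of $\alpha$. The paper's proof is exactly this bookkeeping (written with the condition in the reciprocal form $\frac{\lambda}{1+\lambda}\cdot\frac{n}{2}<\min\{1,\frac{1}{1-\alpha}\}$), so there is nothing to add.
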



\begin{remark} \em{We have several remarks regarding the results of Theorem \ref{BSNN}.
\begin{enumerate}
\item With relation \eqref{dchi} and function $\gamma(v)$ with algebraic or exponential decay as in Theorem \ref{BSNN}, the lower bound value $\eta$ for $v$ does not play a role since $\lambda_2=\lambda_1+1$ or $\chi_2=\chi_1$.
\item If $\alpha=0$, the result of  Theorem \ref{BSNN} (i) recovers the global existence result of \cite{AY-Nonlinearity-2019}. When $\gamma(v)=e^{-\chi v}$ and $n=2$, it was shown recently in \cite{FujieJiang2020-1, Jin-Wang-PAMS} that the system \eqref{KS1} with $\tau=d=1$ possesses a critical mass $m_c=4\pi/\chi>0$ such that the solution may blow up if $\|u_0\|_{L^1(\O)}>m$ while globally exist if $\|u_0\|_{L^1(\O)}<m_c$. Our results in Theorem \ref{BSNN} (ii) extend the same global boundedness results to the case $0<\alpha<1$.
\end{enumerate}
}
\end{remark}

\begin{remark}\em{
It is worthwhile to note that the monotonicity of $\gamma(v)$ and relation \eqref{dchi} are not required in Theorem \ref{BS}, and hence the applications of our results are far more than those motility functions $\gamma(v)$ and $\phi(v)$ discussed above. For example, one can consider the following motility functions
$$\gamma(v)=\ln(v+1), \ \ \phi(v)=\frac{\sigma}{(v+1)^\lambda}\ (\mathrm{or} = e^{-\chi v})$$
and follow the results of Theorem \ref{BS} to find the appropriate conditions for the global boundedness of solutions.
}
\end{remark}

\section{Proof of Main Results}
In this section, we first give the local existence of solutions and recall some well-known results for later use. Then we derive a global boundedness criterion for the system \eqref{KS-N} and show a sufficient condition ensuring such criterion. Finally we proceed to prove our main results stated in Section 1. In the sequel, when appropriate, we use $c_i$ or $C_i$ ($i=1,2,\cdots$) to denote a generic positive constant varying in the context.

\subsection{Preliminaries}
 The local existence of solutions of \eqref{KS1} and \eqref{KSN} was proved in \cite{AY-Nonlinearity-2019} and \cite{JKW-SIAP-2018}, respectively, by the Schauder fixed point theorem, and the uniqueness was proved by a direct argument. We can employ the exact procedures as in \cite{AY-Nonlinearity-2019, JKW-SIAP-2018} with slight modifications to get the local existence and uniqueness of solutions to \eqref{KS-N}. The local existence can also be obtained by Amann's theorem on the triangular system (cf. \cite{A-Book-1993} or \cite{JWEJAM}).  Below we shall state the results only and omit the proof for brevity.

\begin{lemma}[Local existence]\label{local}
Let $\Omega$ be a bounded domain in $\R^n(n\geq 1)$ with smooth boundary and assume $\gamma(v)$ and $\phi(v)$ satisfy the hypotheses (H1) and (H2-(a)). If $u_0  \gneqq 0$ and  $u_0 \in W^{1,\infty}(\O)$, then there exist $T_{\max} \in (0,\infty]$ such that the problem \eqref{KS} has a unique classical solution $(u,v)\in [C^0(\bar{\Omega}\times[0,T_{\max}))\cap C^{2,1}(\bar{\Omega}\times(0,T_{\max}))] \times C^{2,1}(\bar{\Omega}\times(0,T_{\max}))$ satisfying $u,v>0$ in $\Omega \times (0,T_{\max})$. Moreover if $T_{\max}<\infty$, then
$$\lim\limits_{t \nearrow T_{\max}} \|u(\cdot, t)\|_{L^\infty(\Omega)} =\infty.$$
\end{lemma}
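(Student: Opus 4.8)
The plan is to recast the parabolic-elliptic system as a single quasilinear parabolic equation for $u$ and to run a Schauder fixed point argument, following the scheme of \cite{AY-Nonlinearity-2019, JKW-SIAP-2018}; the alternative route via Amann's theory for triangular systems \cite{A-Book-1993} would also work but is less self-contained. Fix $\alpha\in(0,1)$ and let $X$ be a closed bounded ball in $C^{\alpha,\alpha/2}(\bar\Omega\times[0,T])$ centered at $u_0$, with $T>0$ to be chosen small. Given $\tilde u\in X$, one first solves, for each $t$, the elliptic problem $0=d\Delta v+\tilde u-v$ with $\partial_\nu v|_{\partial\Omega}=0$; elliptic $L^p$ and Schauder estimates give $v(\cdot,t)\in W^{2,p}(\Omega)$ for every $p$ and $v(\cdot,t)\in C^{2+\alpha}(\bar\Omega)$, with norms controlled by $\|\tilde u(\cdot,t)\|_{C^\alpha}$, and the dependence on $t$ inherits (a fraction of) the Hölder modulus of $\tilde u$. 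Since $\tilde u$ stays close to $u_0\gneqq0$, $v$ ranges in a compact subinterval of $(0,\infty)$, so by (H1) and (H2-(a)) the quantities $\gamma(v),\gamma'(v),\phi(v),\phi'(v)$ are bounded and $\gamma(v)\ge\gamma_\ast>0$. One then solves the linear parabolic problem
\begin{equation*}
u_t=\gamma(v)\Delta u+\big(\gamma'(v)-\phi(v)\big)\nabla v\cdot\nabla u-\big(\phi'(v)|\nabla v|^2+\phi(v)\Delta v\big)u,\qquad \partial_\nu u|_{\partial\Omega}=0,\quad u(\cdot,0)=u_0,
\end{equation*}
obtained by expanding $\nabla\cdot(\gamma(v)\nabla u-u\phi(v)\nabla v)$; all coefficients are known Hölder continuous functions of $(x,t)$, so parabolic Schauder theory yields a unique solution $u\in C^{2+\alpha,1+\alpha/2}(\bar\Omega\times(0,T])\cap C^0(\bar\Omega\times[0,T])$, which defines the map $\Phi:\tilde u\mapsto u$.

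Next I would verify that $\Phi$ maps $X$ into itself for $T$ small and is compact: the parabolic a priori estimate bounds $\|u\|_{C^{2+\alpha,1+\alpha/2}}$ in terms of the data, and since $u(\cdot,0)=u_0$, shrinking $T$ keeps $u$ inside the ball; compactness follows from the compact embedding $C^{2+\alpha,1+\alpha/2}\hookrightarrow C^{\alpha,\alpha/2}$. Schauder's fixed point theorem then produces a fixed point, i.e.\ a local classical solution $(u,v)$ on $[0,T]$. Uniqueness follows by a standard energy argument: subtracting two solutions, testing the difference equations, using the boundedness of the coefficients together with elliptic estimates for the difference of the $v$-components, and closing with Gr\"onwall's inequality. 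For positivity, $v>0$ comes from the elliptic weak and strong maximum principles, using $u\ge0$ and $u\not\equiv0$ (the latter because the mass $\|u\|_{L^1(\Omega)}$ is conserved and $u_0\gneqq0$); positivity of $u$ follows by writing its equation in non-divergence form and applying the parabolic strong maximum principle with $u_0\gneqq0$.

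Finally, extend the solution to a maximal interval $[0,T_{\max})$ by iteration, and prove the blow-up criterion by contradiction. If $T_{\max}<\infty$ but $\|u(\cdot,t)\|_{L^\infty(\Omega)}\le M$ on $[0,T_{\max})$, then elliptic regularity gives a uniform bound for $v$ in $W^{2,p}(\Omega)$, hence in $C^{1+\beta}(\bar\Omega)$ for some $\beta\in(0,1)$, uniformly in $t$; feeding this back, the coefficients of the $u$-equation are uniformly bounded, so parabolic $L^p$ and then Schauder estimates give a uniform bound for $u$ in $C^{\beta,\beta/2}(\bar\Omega\times[0,T_{\max}))$ and in $C^{2+\beta,1+\beta/2}$ on $[\delta,T_{\max})$. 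This lets the solution be restarted from a time close to $T_{\max}$ and continued beyond it, contradicting maximality; hence $\|u(\cdot,t)\|_{L^\infty(\Omega)}\to\infty$ as $t\nearrow T_{\max}$.

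The main obstacle is bookkeeping the regularity chain so that it closes consistently: the elliptic step converts $C^\alpha$ (in $x$, and only a fraction of $\alpha/2$ in $t$, since differences in $t$ control only the $C^0$-norm uniformly) into $W^{2,p}\cap C^{2+\alpha}_x$, and one must confirm that the Hölder exponents produced there match those required by the parabolic Schauder step, for both the fixed-point map and the continuation argument. A secondary point is the low regularity $u_0\in W^{1,\infty}(\Omega)\subset C^\alpha(\bar\Omega)$ rather than $C^{2+\alpha}$: one exploits the smoothing of the parabolic equation so that $C^{2,1}$ regularity is claimed only for $t>0$, while continuity up to $t=0$ suffices at the $C^0$ level.
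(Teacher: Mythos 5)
Your proposal follows exactly the route the paper itself indicates: the paper omits the proof of Lemma \ref{local}, stating only that it follows from the Schauder fixed-point procedure of \cite{AY-Nonlinearity-2019, JKW-SIAP-2018} with slight modifications (or alternatively from Amann's theorem), and your write-up is a correct, standard rendering of that very procedure, including the elliptic solve for $v$, the linearized parabolic step, positivity via maximum principles, and the continuation/blow-up criterion. The Hölder-exponent bookkeeping you flag as the main obstacle is indeed the only delicate point, and it is resolved in the cited references by interpolating the time regularity of $v$ down to a smaller exponent before applying parabolic Schauder theory.
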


For convenience, we recall a well-known result below (cf. \cite{Brezis-Strauss}).
\begin{lemma}\label{regularity}
Let $\Omega$ be a bounded domain in $\R^n(n\geq 1)$ with smooth boundary and $u\in L^1(\O)$ be a non-negative function. If $v \geq 0$ satisfies
\begin{eqnarray*}
\begin{cases}
-\d \Delta v+v=u, &x\in\Omega,\\
\partial_{\nu} v=0, &x\in \partial \Omega,
\end{cases}
\end{eqnarray*}
then
\begin{eqnarray*}
v\in
\begin{cases}
L^\infty, \ & \ \text{if} \ n=1,\\
L^q(1\leq q<\infty), \ & \ \text{if} \ n=2,\\
L^r (1\leq r< {\frac{n}{n-2}}), \ & \ \text{if} \ n>2.
\end{cases}
\end{eqnarray*}
\end{lemma}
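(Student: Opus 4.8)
The plan is to establish all three cases through the classical Brezis--Strauss duality argument, in which the integrability of $v$ against test functions is read off from elliptic regularity for the \emph{adjoint} Neumann problem. Throughout I would write $L(\cdot):=-\d\Delta(\cdot)+(\cdot)$, which is formally self-adjoint under the homogeneous Neumann boundary condition. Fix an exponent $p\in[1,\infty)$ with conjugate $p'$.

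\emph{Step 1 (adjoint regularity).} For an arbitrary $\psi\in L^{p'}(\O)$ I would solve the adjoint problem $Lw=\psi$, $\partial_\nu w=0$. By the Agmon--Douglis--Nirenberg / Calder\'on--Zygmund $L^{p'}$ theory for the Neumann Laplacian on a smooth bounded domain one obtains $w\in W^{2,p'}(\O)$ with $\norm{w}_{W^{2,p'}}\le C\norm{\psi}_{L^{p'}}$ for $1<p'<\infty$. \emph{Step 2 (embedding threshold).} The crux is that $W^{2,p'}(\O)\hookrightarrow C^0(\bar{\O})\subset L^\infty(\O)$ exactly when $2>n/p'$, i.e. $p'>n/2$, in which range $\norm{w}_{L^\infty}\le C\norm{\psi}_{L^{p'}}$. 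Rewriting $p'>n/2$ in terms of $p=(p')'$ reproduces precisely the stated ranges: $p<\frac{n}{n-2}$ when $n\ge3$, and $p<\infty$ when $n=2$ (smaller exponents then following for free since $\O$ is bounded). \emph{Step 3 (duality pairing).} Testing $Lv=u$ against $w$ and $Lw=\psi$ against $v$, and using $\partial_\nu v=\partial_\nu w=0$ to discard the boundary terms, yields the symmetric identity
$$\int_\O v\,\psi\,dx=\d\int_\O\nabla v\cdot\nabla w\,dx+\int_\O vw\,dx=\int_\O u\,w\,dx,$$
whence $\abs{\int_\O v\psi}\le\norm{u}_{L^1}\norm{w}_{L^\infty}\le C\norm{u}_{L^1}\norm{\psi}_{L^{p'}}$. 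Taking the supremum over $\psi$ in the unit ball of $L^{p'}$ and invoking $(L^{p'})^*=L^p$ gives $v\in L^p$ with $\norm{v}_{L^p}\le C\norm{u}_{L^1}$. For $n=1$ I would use the endpoint $p'=1$ instead: the Green's function $G$ of $L$ on an interval under Neumann conditions is continuous and bounded, so $v=G*u$ gives $\norm{v}_{L^\infty}\le\norm{G}_{L^\infty}\norm{u}_{L^1}$ directly (equivalently $\norm{w}_{L^\infty}\le C\norm{\psi}_{L^1}$ and the same pairing identifies $v$ with an element of $(L^1)^*=L^\infty$).

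\emph{The main obstacle} lies in Step 3: the integration by parts is only formal for merely $u\in L^1(\O)$, where $v$ is understood as a weak (say $W^{1,q}$ with $q<\frac{n}{n-1}$) solution. I would make it rigorous by approximation, choosing $u_k\in C^\infty(\bar{\O})$ with $u_k\ge0$ and $u_k\to u$ in $L^1$, letting $v_k$ be the corresponding smooth solutions, running Steps 1--3 with $v_k$ (where all integrations by parts are legitimate) to obtain the uniform bound $\norm{v_k}_{L^p}\le C\norm{u_k}_{L^1}$, and then passing to the limit via uniqueness of the Neumann problem (so that $v_k\to v$) together with weak compactness and Fatou's lemma to transfer the bound to $v$. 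The borderline exponent $p=\frac{n}{n-2}$ is genuinely excluded, since it corresponds to $p'=n/2$, where $W^{2,n/2}\not\hookrightarrow L^\infty$ fails.

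As an alternative unified route that bypasses the pairing subtleties altogether, I would represent $v=\int_0^\infty e^{-t}e^{t\d\Delta_N}u\,dt$ through the Neumann heat semigroup and invoke the $L^1\!\to\!L^q$ smoothing estimate $\norm{e^{t\d\Delta_N}u}_{L^q}\le C\,(1+t^{-\frac{n}{2}(1-\frac{1}{q})})\norm{u}_{L^1}$ (valid on smooth bounded domains by the Gaussian bounds for the Neumann kernel). The time integral converges iff $\frac{n}{2}(1-\frac{1}{q})<1$, which recovers the same three regimes, including $q=\infty$ precisely when $n=1$; this gives a single computation covering all cases at once.
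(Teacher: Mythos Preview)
The paper does not actually prove this lemma: it merely cites it as a well-known result due to Brezis--Strauss (``For convenience, we recall a well-known result below (cf.\ \cite{Brezis-Strauss})''). Your write-up is correct and is precisely the Brezis--Strauss duality argument that the citation points to, so in that sense you have supplied exactly what the paper omits. The approximation step you flag as the main obstacle is handled correctly, and your alternative heat-semigroup route is also valid and gives the same exponent thresholds.
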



\begin{lemma}\label{Merle}
Let $\Omega$ be a bounded domain in $\R^2$ with smooth boundary. Consider the following problem
\begin{eqnarray*}
\begin{cases}
-\d \Delta v+v=u, &x\in\Omega,\\
\partial_{\nu} v=0, &x\in \partial \Omega
\end{cases}
\end{eqnarray*}
where $u\in L^1(\O)$ with $\|u\|_{L^1(\O)}=m$. If $0< \Lambda <4\pi d/m$, then there is a constant $C>0$ such that the solution of the above problem satisfies $$\int_\O e^{\Lambda v}dx\leq C.$$
\end{lemma}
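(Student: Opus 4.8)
This is a Brezis--Merle / Trudinger--Moser type estimate, and I would approach it through the Green's function representation of $v$ together with Jensen's inequality. Let $G=G_d(x,y)$ denote the Green's function of $-d\Delta+1$ on $\Omega$ under the homogeneous Neumann boundary condition, so that the solution of the stated problem is $v(x)=\int_\Omega G(x,y)u(y)\,dy$. The plan is in three steps: first, record the logarithmic pointwise bound for $G$ in dimension two; second, rewrite the right-hand side as an average against the probability measure $u\,dy/m$ and apply Jensen's inequality to the convex function $\exp$; third, integrate in $x$, exchange the order of integration via Fubini, and reduce matters to the single scalar integral $\int_\Omega|x-y|^{-\Lambda m/(2\pi d)}\,dx$, which is finite uniformly in $y$ precisely when $\Lambda<4\pi d/m$.

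Carrying this out, the first step is to establish that there is a constant $C_1=C_1(d,\Omega)>0$ with
$$0\le G(x,y)\le \frac{1}{2\pi d}\log\frac{1}{|x-y|}+C_1\qquad\text{for all }x,y\in\overline{\Omega},\ x\ne y,$$
obtained by writing $G$ as the fundamental solution of $-d\Delta+1$ on $\R^2$ (which near the diagonal behaves like $\tfrac{1}{2\pi d}\log\tfrac{1}{|x-y|}$ up to a bounded term) plus a corrector controlled by elliptic regularity up to the boundary, the lower bound following from the maximum principle. Since $u\ge0$ and $\|u\|_{L^1(\Omega)}=m$, the measure $d\mu(y):=u(y)\,dy/m$ is a probability measure on $\Omega$, and
$$\Lambda v(x)=\Lambda m\int_\Omega G(x,y)\,d\mu(y)\le\int_\Omega\Big(\tfrac{\Lambda m}{2\pi d}\log\tfrac{1}{|x-y|}+\Lambda m\,C_1\Big)\,d\mu(y),$$
so Jensen's inequality for $\exp$ gives
$$e^{\Lambda v(x)}\le e^{\Lambda m C_1}\int_\Omega|x-y|^{-\Lambda m/(2\pi d)}\,d\mu(y).$$
Integrating in $x$ over $\Omega$ and using Fubini,
$$\int_\Omega e^{\Lambda v(x)}\,dx\le e^{\Lambda m C_1}\int_\Omega\Big(\int_\Omega|x-y|^{-\Lambda m/(2\pi d)}\,dx\Big)\,d\mu(y),$$
and since $\Omega\subset B_R(y)$ with $R=\operatorname{diam}\Omega$, the inner integral is bounded uniformly in $y$ by $2\pi\int_0^R\rho^{\,1-\Lambda m/(2\pi d)}\,d\rho$, which is finite exactly because $\Lambda m/(2\pi d)<2$, i.e. $\Lambda<4\pi d/m$. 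As $\mu$ is a probability measure, $\int_\Omega e^{\Lambda v}\,dx\le C$ follows.

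The step I expect to be the main obstacle is the uniform logarithmic bound on $G$ over all of $\overline{\Omega}\times\overline{\Omega}$: in the interior it is classical, but near $\partial\Omega$ the Neumann corrector is sensitive to the boundary geometry and must be controlled using the smoothness of $\partial\Omega$ (locally flattening the boundary and reflecting), which is precisely where that hypothesis enters. An equivalent route would be to extend $u$ appropriately and invoke the classical Brezis--Merle inequality for $-d\Delta+1$ on a fixed ball $B\supset\Omega$, then transfer the estimate back by comparison; in either formulation the delicate point is the boundary contribution to $G$.
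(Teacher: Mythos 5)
Your proposal is correct and follows essentially the same route as the paper: Green's function representation of $v$, the logarithmic pointwise bound $G(x,y)\le \frac{1}{2\pi d}\ln\frac{1}{|x-y|}+K$, Jensen's inequality against the probability measure $u\,dy/m$, Fubini, and the observation that $\int_\Omega|x-y|^{-\Lambda m/(2\pi d)}\,dx$ is uniformly bounded precisely when $\Lambda<4\pi d/m$. The only difference is that you sketch a proof of the Green's function bound, whereas the paper simply cites it from Senba--Suzuki.
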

\begin{proof}
The proof is inspired by \cite[Theorem 1]{Brezis-Merle-CPDE} (see also \cite[Theorem A.3]{Tao-Winkler-JDE}). For preciseness of our results, we present a proof similar to the one of \cite[Theorem A.3]{Tao-Winkler-JDE}.  Let $G(x,y)$ denote the Green's function of $d \Delta+1$ in $\Omega$ subject to the homogeneous Neumann boundary condition. Then it follows that (cf. \cite{Senba-Suzuki-ADM, Senba-Suzuki-2000})
\begin{eqnarray}\label{green}
|G(x,y)| \leq \frac{1}{2 \pi d}\ln \frac{1}{|x-y|}+K \ \ \mathrm{for \ all} \ x,y \in \Omega \ \mathrm{with} \ x\ne y
\end{eqnarray}
where $K$ is positive constant. Then $v$ can be represented as
$$v(x)=\int_\O G(x,y) u(y)dy$$
which yields from \eqref{green} that
$$v(x)\leq \int_\O \Big(\frac{1}{2\pi d}\ln\frac{1}{|x-y|}+K\Big)\cdot |u(y)|dy\leq \frac{1}{2\pi d} \int_\O \ln\frac{1}{|x-y|}\cdot |u(y)|dy+Km.$$
The by Jensen's inequality and Fubini's theorem, we have
\begin{eqnarray*}
\begin{aligned}
\int_{\Omega} e^{\Lambda v(x)} dx & \leq e^{\Lambda Km} \int_{\Omega} e^{\frac{\Lambda m}{2\pi d} \cdot \int_{\Omega} \ln \frac{1}{|x-y|} \cdot \frac{|u(y)|}{m} d y} d x \\
&\leq  A\int_{\Omega}\left(\int_{\Omega} e^{\frac{\Lambda m}{2\pi d} \cdot \ln \frac{1}{|x-y|}} \cdot \frac{|u(y)|}{m} d y\right) d x \\
&= A\int_{\Omega} \int_{\Omega}|x-y|^{-\frac{\Lambda m}{2\pi d}} \cdot \frac{|u(y)|}{m} d y d x \\
&\leq  A\int_{\Omega}\int_{\Omega}|x-y|^{-\frac{\Lambda m}{2\pi d}} \cdot \frac{|u(y)|}{m} d y d x \\
&=A \int_{\Omega}\left(\int_{\Omega}|x-y|^{-\frac{\Lambda m}{2\pi d}} d x\right) \cdot \frac{|u(y)|}{m} d y
\end{aligned}
\end{eqnarray*}
where $A=e^{\Lambda Km}$. Since $\O$ is bounded, if $\frac{\Lambda m}{2\pi d}<2$ (i.e. $\Lambda <4\pi d/m$), then there is a constant $c_0>0$  such that $\int_{\Omega}|x-y|^{-\frac{\Lambda m}{2\pi d}}dx<c_0$ and hence
\begin{eqnarray*}
\int_{\Omega} e^{\Lambda v(x)} dx \leq c_0 A \int_\O \frac{|u(y)|}{m} d y=c_0 A.
\end{eqnarray*}
This completes the proof.
\end{proof}

\begin{lemma}[Trudinger-Moser inequality \cite{Nagai-Funk}]\label{moser}
Let $\Omega$ be a bounded domain in $\R^n (n\geq 2)$ with smooth boundary. Then  for any $u\in W^{1,n}(\O)$ and any $\varepsilon>0$,   there exists a positive constant $C_\varepsilon$  depending on $\varepsilon$ and $\Omega$ such that
\begin{equation*}\label{T-M}
\int_\Omega \exp{|u|}dx\leq C_\varepsilon \exp\left\{\left(\frac{1}{\beta_n}+\varepsilon\right)\|\nabla u\|_{L^n(\O)}^n+\frac{1}{|\Omega|}\|u\|_{L^1(\O)}^n\right\}
\end{equation*}
where $\beta_n=n\big(\frac{n \alpha_n}{n-1}\big)^{n-1}$ and $\alpha_n=n\omega_{n-1}^{1/(n-1)}$ with $\omega_{n-1}$ denoting the $(n-1)$-dimensional surface area of the unit sphere in $\R^n$.
\end{lemma}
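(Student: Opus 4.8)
The plan is to reduce the stated inequality -- whose left-hand side is linear in $u$ inside the exponential -- to the classical \emph{power-type} Moser--Trudinger inequality by a single application of Young's inequality, the point being that the optimal choice of the Young parameter is precisely what reproduces the constant $\frac{1}{\beta_n}$. First I would split $u$ into its mean and oscillation: set $\bar u=\frac{1}{|\Omega|}\int_\Omega u\,dx$ and $w=u-\bar u$, so that $\int_\Omega w\,dx=0$ and $\nabla w=\nabla u$. Since $|u|\le |w|+|\bar u|$, one has $\int_\Omega e^{|u|}\,dx\le e^{|\bar u|}\int_\Omega e^{|w|}\,dx$, and it remains to bound the oscillation integral by the gradient term and $e^{|\bar u|}$ by the $L^1$ term.

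For the oscillation I would invoke the sharp power-type Moser--Trudinger inequality for mean-zero functions, namely the existence of a constant $C(\Omega)$ with
$$\int_\Omega \exp\left(\alpha_n\Big(\frac{|w|}{\|\nabla w\|_{L^n(\Omega)}}\Big)^{\frac{n}{n-1}}\right)dx \le C(\Omega),$$
a classical fact proved by Schwarz symmetrization reducing the problem to a one-dimensional radial estimate; because we only need the conclusion with an arbitrary $\varepsilon$ of slack, the delicate critical-constant endpoint behaviour is irrelevant and any constant slightly below $\alpha_n$ suffices. To convert the linear exponent $|w|$ into this power exponent I would apply Young's inequality $ab\le \frac{a^p}{p}+\frac{b^q}{q}$ with the conjugate pair $p=\frac{n}{n-1}$, $q=n$: writing $s=\|\nabla w\|_{L^n(\Omega)}$ and $|w|=(\mu\,|w|/s)\cdot(s/\mu)$ for a free parameter $\mu>0$ yields
$$|w| \le \frac{n-1}{n}\,\mu^{\frac{n}{n-1}}\Big(\frac{|w|}{s}\Big)^{\frac{n}{n-1}} + \frac{1}{n}\,\mu^{-n}\,s^{n}.$$
Choosing $\mu$ so that $\frac{n-1}{n}\mu^{\frac{n}{n-1}}=\alpha_n$, i.e.\ $\mu^n=\big(\frac{n\alpha_n}{n-1}\big)^{n-1}$, makes the first coefficient match the Moser--Trudinger constant while the second becomes $\frac{1}{n}\mu^{-n}=\big[n\big(\frac{n\alpha_n}{n-1}\big)^{n-1}\big]^{-1}=\frac{1}{\beta_n}$; taking $\mu$ marginally smaller trades the endpoint case for the extra $\varepsilon$ on the gradient term. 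Exponentiating and integrating then gives $\int_\Omega e^{|w|}\,dx \le C(\Omega)\exp\big((\frac{1}{\beta_n}+\varepsilon)\|\nabla u\|_{L^n(\Omega)}^n\big)$.

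Finally, for the mean I would use $|\bar u|\le \frac{1}{|\Omega|}\int_\Omega|u|\,dx=\frac{1}{|\Omega|}\|u\|_{L^1(\Omega)}$. When $\|u\|_{L^1(\Omega)}\ge 1$ this is dominated by $\frac{1}{|\Omega|}\|u\|_{L^1(\Omega)}^n$ directly, whereas when $\|u\|_{L^1(\Omega)}<1$ the factor $e^{|\bar u|}$ is bounded by the fixed constant $e^{1/|\Omega|}$ and can be absorbed into $C_\varepsilon$ (the nonnegative term $\frac{1}{|\Omega|}\|u\|_{L^1(\Omega)}^n$ only enlarges the right-hand side); combining the two regimes yields $e^{|\bar u|}\le C\exp\big(\frac{1}{|\Omega|}\|u\|_{L^1(\Omega)}^n\big)$. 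Multiplying the oscillation and mean bounds then establishes the claim. The one genuinely substantial ingredient is the sharp constant $\alpha_n$ in the symmetric Moser--Trudinger inequality, for which I would cite the classical symmetrization argument; the remaining work is the Young-inequality optimization, whose only subtlety is checking that the optimal parameter reproduces $\beta_n$ exactly, as the computation above confirms.
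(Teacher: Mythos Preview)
The paper does not actually prove this lemma: it is quoted verbatim from Nagai--Senba--Yoshida \cite{Nagai-Funk} and used as a black box, so there is no ``paper's own proof'' to compare against. Your derivation is the standard one and is correct in substance: splitting off the mean, invoking the power-type Moser--Trudinger inequality for the oscillation, and then optimising Young's inequality with conjugate exponents $\frac{n}{n-1}$ and $n$ is exactly how one passes from the exponent $\alpha_n|w|^{n/(n-1)}$ to the linear exponent $|w|$ while producing the constant $\frac{1}{\beta_n}$ on the gradient term; your computation that the optimal $\mu$ yields $\frac{1}{n}\mu^{-n}=\beta_n^{-1}$ is right, and the handling of the $L^1$ term via the two cases $\|u\|_{L^1}\gtrless 1$ is fine.

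One small caveat: the sharp Moser--Trudinger inequality with constant $\alpha_n$ that you invoke is classically stated for $W^{1,n}_0(\Omega)$, where Schwarz symmetrization applies directly; for \emph{mean-zero} functions in $W^{1,n}(\Omega)$ with no boundary condition the symmetrization argument does not go through verbatim, and one instead appeals to variants due to Chang--Yang or Cianchi. Since you correctly observe that only a subcritical constant is needed (the $\varepsilon$ absorbs any defect), this is not a genuine gap, but you should cite the appropriate mean-zero version rather than the $W^{1,n}_0$ symmetrization argument.
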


%

\subsection{A boundedness criterion}

\begin{lemma}\label{criterion}
Let the assumptions in Lemma \ref{local} hold. If there exists constant $C_0>0$ independent of $t$ such that the following inequality holds for any $0<t<T_{\max}$
\begin{equation}\label{Lp}
\|u(\cdot,t)\|_{L^p(\Omega)} \leq C_0\ \ \text{for some} \ \ p>\frac{n}{2},
\end{equation}
then the system (\ref{KS-N}) has a unique classical solution $(u,v)$ satisfying
\begin{equation*}
\|u(\cdot,t)\|_{L^\infty(\O)}+\|v(\cdot,t)\|_{W^{1,\infty}(\O)}\leq C,
\end{equation*}
where $C$ is a constant independent of $t$.
\end{lemma}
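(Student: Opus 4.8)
The plan is to bootstrap from the $L^p$ bound \eqref{Lp} to an $L^\infty$ bound on $u$ via an $L^p$-to-$L^\infty$ iteration, using the elliptic estimates for the $v$-equation as the essential link. First I would use the second equation $0 = d\Delta v + u - v$ together with standard elliptic regularity: since $\|u(\cdot,t)\|_{L^p} \leq C_0$ with $p > n/2$, elliptic $W^{2,p}$ estimates give $\|v(\cdot,t)\|_{W^{2,p}} \leq C$, and then Sobolev embedding (using $p > n/2$) yields $\|v(\cdot,t)\|_{W^{1,\infty}(\Omega)} \leq C$ uniformly in $t$. Combined with the a priori lower bound \eqref{lower}, $v$ stays in a fixed compact interval $[\eta, M]$, so by (H1) and (H2) the coefficients $\gamma(v)$, $\phi(v)$, $\gamma'(v)$, $\phi'(v)$ are all bounded above and $\gamma(v)$ is bounded below by a positive constant along the solution. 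This is the point where the parabolic-elliptic structure does the real work: the regularity of $v$ is slaved directly to $u$ with a one-step gain, no further coupling needed.

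Next I would run a Moser–Alikakos-type iteration on the first equation. Writing the first equation as $u_t = \nabla\cdot(\gamma(v)\nabla u) - \nabla\cdot(u\phi(v)\nabla v)$, I would test with $u^{q-1}$ for $q \geq 2$, integrate by parts, and use Young's inequality to absorb the drift term $\int u^{q-1}\phi(v)\nabla v \cdot \nabla u$ into the good diffusion term $\int \gamma(v) u^{q-2}|\nabla u|^2$ — this absorption is legitimate precisely because $\gamma(v) \geq \gamma_0 > 0$, $|\phi(v)| \leq C$, and $\|\nabla v\|_{L^\infty} \leq C$ on the solution. The $|\nabla v|$ factor, which in a naive estimate would cost an extra power of $u$, is now just a bounded multiplier. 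This produces a differential inequality of the form $\frac{d}{dt}\int u^q + \int |\nabla u^{q/2}|^2 \leq C_q \int u^q$ (or with a $\|u\|_{L^{q/?}}$ on the right), from which the standard iteration on $q = 2^k$ — using the Gagliardo–Nirenberg inequality to interpolate and the mass conservation \eqref{L1} to seed the recursion — yields a bound on $\|u(\cdot,t)\|_{L^\infty(\Omega)}$ uniform in $t$. Alternatively, one could invoke a semigroup/Neumann heat kernel argument (Moser iteration via the variation-of-constants formula for the operator $\nabla\cdot(\gamma(v)\nabla\,\cdot)$), but the direct testing argument is cleaner here since the coefficients are merely continuous in time.

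Once $\|u(\cdot,t)\|_{L^\infty} \leq C$ is established uniformly, I feed this back into the $v$-equation: elliptic regularity now gives $\|v(\cdot,t)\|_{W^{1,\infty}} \leq C$ uniformly (indeed $\|v\|_{C^{1,\theta}}$ for any $\theta<1$), completing the claimed bound \eqref{bound}. Finally, the uniform $L^\infty$ bound on $u$ rules out finite-time blow-up, so by the extensibility criterion of Lemma \ref{local} ($\lim_{t\nearrow T_{\max}}\|u(\cdot,t)\|_{L^\infty} = \infty$ if $T_{\max} < \infty$) we conclude $T_{\max} = \infty$, and uniqueness is already part of Lemma \ref{local}. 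The main obstacle I anticipate is purely bookkeeping in the Moser iteration — tracking the constants $C_q$ through the recursion to confirm they do not grow too fast as $q \to \infty$, and handling the boundary terms correctly under the Neumann conditions; there is no conceptual difficulty because the parabolic-elliptic reduction has already removed the only genuinely dangerous term, namely the chemotactic cross-coupling, by turning $\nabla v$ into a uniformly bounded drift.
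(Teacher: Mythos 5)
There is a genuine gap at the very first step, and it is exactly the point the paper's proof is organized around. You claim that $\|u(\cdot,t)\|_{L^p(\Omega)}\leq C_0$ with $p>\frac{n}{2}$, combined with elliptic $W^{2,p}$ estimates for the second equation, yields $\|v(\cdot,t)\|_{W^{1,\infty}(\Omega)}\leq C$ by Sobolev embedding. This is false: $W^{2,p}(\Omega)\hookrightarrow L^\infty(\Omega)$ does hold for $p>\frac{n}{2}$, but $W^{2,p}(\Omega)\hookrightarrow W^{1,\infty}(\Omega)$ requires $p>n$. For instance with $n=2$ and $p$ slightly above $1$ (which is all that \eqref{Lp} guarantees), $W^{2,p}$ embeds only into $W^{1,q}$ with $q=\frac{2p}{2-p}$ finite, so $\nabla v$ need not be bounded. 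Consequently your Moser--Alikakos iteration cannot start: the drift term produces $\int_\Omega u^q|\nabla v|^2\,dx$, and you cannot treat $|\nabla v|$ as ``just a bounded multiplier'' at this stage. The claim that the parabolic--elliptic structure gives a ``one-step gain'' with ``no further coupling needed'' overstates what the hypothesis delivers.

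The paper closes this gap with an intermediate bootstrap that your proposal omits. Using only $\|v\|_{L^\infty}\leq c_1$ (which \emph{is} legitimate for $p>\frac{n}{2}$), it estimates $\int_\Omega u^p|\nabla v|^2\,dx\leq\int_\Omega u^{p+1}\,dx+\int_\Omega|\nabla v|^{2(p+1)}\,dx$, controls $\|\nabla v\|_{L^{2(p+1)}}^{2(p+1)}\leq c\,\|u\|_{L^{p+1}}^{p+1}$ by Gagliardo--Nirenberg interpolation between $W^{2,p+1}$ and $L^\infty$, and then absorbs $\int_\Omega u^{p+1}\,dx$ into the dissipation $\|\nabla u^{p/2}\|_{L^2}^2$ by another Gagliardo--Nirenberg step seeded by the bound on $\|u\|_{L^{p/2}}$ from \eqref{Lp}. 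This raises the integrability of $u$ to some $p>n$; only then does elliptic regularity give $v\in W^{2,p}\hookrightarrow C^{1,1-n/p}$, hence $\|v\|_{W^{1,\infty}}\leq c$, after which your Moser iteration (and your extensibility argument via Lemma \ref{local}) goes through exactly as you describe. So the back end of your proposal is fine; the missing idea is the $L^{p>n/2}\to L^{p>n}$ upgrade that must precede the $W^{1,\infty}$ bound on $v$.
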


\begin{proof}
We first claim under \eqref{Lp} the following inequality holds
\begin{equation}\label{claim}
\|u\|_{L^p(\O)} \leq c_0 \ \ \mathrm{for \ all} \ \ t\in (0, T_{\max}), \ p>n
\end{equation}
for some constant $c_0>0$ independent of $t$. To this end, we multiply the first equation of \eqref{KS-N} by $ u^{p-1}$ $(p>1)$ and integrate the resulting equation by parts to get
\begin{eqnarray}\label{li1}
\begin{split}
\frac{1}{p} \frac{d}{dt} \int_{\Omega} u^p dx =& -(p-1)\int_{\Omega} u^{p-2} \nabla u(\gamma(v)\nabla u-u\phi(v)\nabla v)dx\\
=& -(p-1) \int_{\Omega} \gamma(v)u^{p-2} |\nabla u|^2 dx+ (p-1) \int_{\Omega} \phi(v)u^{p-1} \nabla u\nabla v dx.
\end{split}
\end{eqnarray}
Thanks to the elliptic regularity theorem applied to the second equation of \eqref{KS-N}, we have $v\in W^{2, p}(\O)$ given $u \in L^p(\O)$. Then by the Sobolev embedding and \eqref{Lp}, we find a constant $c_1>0$ such that
\begin{equation}\label{li1n}
\|v\|_{L^\infty(\O)} \leq c_1 \ \ \mathrm{for \ all} \ \ t\in (0, T_{\max}).
\end{equation}
Then we can find two constants $c_2, c_3>0$, thanks to \eqref{lower} and hypotheses (H1)-(H2), such that
$$\gamma(v)\geq c_2, \ \phi(v)\leq c_3.$$
Then it follows from \eqref{li1} that
\begin{eqnarray}\label{li2}
\frac{1}{p} \frac{d}{dt} \int_{\Omega} u^p dx+c_2(p-1) \int_{\Omega}u^{p-2} |\nabla u|^2 dx\leq c_3(p-1) \int_{\Omega} u^{p-1} \nabla u\nabla v dx.
\end{eqnarray}
By Young's inequality, we have
\begin{eqnarray*}\label{li3}
c_3(p-1) \int_{\Omega} u^{p-1} \nabla u\nabla v dx \leq \frac{c_2}{2}(p-1) \int_{\Omega}u^{p-2} |\nabla u|^2 dx+c_4 \int_\O u^p|\nabla v|^2dx
\end{eqnarray*}
which, substituted into \eqref{li2} along with the fact $u^{p-2}|\nabla u|^2=\frac{4}{p^2}|\nabla u^{\frac{p}{2}}|^2$, gives
\begin{eqnarray}\label{li4}
\frac{1}{p} \frac{d}{dt} \int_{\Omega} u^p dx+\frac{2c_2(p-1)}{p^2} \int_{\Omega} |\nabla u^{\frac{p}{2}}|^2 dx\leq c_4 \int_\O u^p|\nabla v|^2dx.
\end{eqnarray}
Next we estimate the term on the right hand side of \eqref{li4}. First the Young's inequality gives
\begin{eqnarray}\label{li5}
\int_\O u^p|\nabla v|^2dx \leq \int_\O u^{p+1}dx+\int_\O |\nabla v|^{2(p+1)}dx
\end{eqnarray}
where the last inequality follows from the elliptic regularity applied to the second equation of \eqref{KS-N}.
Moreover the Gagliardo-Nirenberg inequality with \eqref{li1n} leads to
\begin{eqnarray*}\label{li6}
\begin{aligned}
\|\nabla v\|_{L^{2(p+1)}(\O)}^{2(p+1)}\leq c_5 \|v\|_{W^{2,p+1}(\O)}^{p+1}\|v\|_{L^\infty(\O)}^{p+1}
\leq c_6\|v\|_{W^{2,p+1}(\O)}^{p+1}\leq c_7\|u\|_{L^{p+1}(\O)}^{p+1}.
\end{aligned}
\end{eqnarray*}
This along with \eqref{li5} updates \eqref{li4} as
\begin{eqnarray}\label{li7}
\frac{1}{p} \frac{d}{dt} \int_{\Omega} u^p dx+\frac{2c_2(p-1)}{p^2} \int_{\Omega} |\nabla u^{\frac{p}{2}}|^2 dx\leq c_8 \int_\O u^{p+1}dx+c_9.
\end{eqnarray}
Now adding $\frac{1}{p}\int_{\Omega} u^p dx$ to both sides of \eqref{li7} and using the fact
$$\frac{1}{p}\int_{\Omega} u^p dx \leq \int_\O u^{p+1}dx+c_p$$
for some constant $c_p>0$ by the Young's inequality, we have from \eqref{li7} that
\begin{eqnarray}\label{li8}
\frac{1}{p} \frac{d}{dt} \int_{\Omega} u^p dx+\frac{1}{p}\int_{\Omega} u^p dx+\frac{2c_2(p-1)}{p^2} \int_{\Omega} |\nabla u^{\frac{p}{2}}|^2 dx\leq c_{10} \int_\O u^{p+1}dx+c_{11}.
\end{eqnarray}
Next we employ the Gagliardo-Nirenberg inequality again to have
\begin{equation}\label{li9}
\int_{\Omega} u^{p+1}dx=\Big\|u^{\frac{p}{2}}\Big\|_{L^{\frac{2(p+1)}{p}}(\Omega)}^{\frac{2(p+1)}{p}} \leqslant C_{GN}\bigg(\left\|u^{\frac{p}{2}}\right\|_{L^{1}(\Omega)}^{\frac{2(p+1)}{p}(1-\theta)}\left\|\nabla u^{\frac{p}{2}}\right\|_{L^{2}(\Omega)}^{\frac{2(p+1)}{p} \theta}+\left\|u^{\frac{p}{2}}\right\|_{L^{1}(\Omega)}^{\frac{2(p+1)}{p}}\bigg)
\end{equation}
with $\theta=\frac{n}{n+2} \frac{p+2}{p+1} \in(0,1)$ due to $p>n$. By \eqref{Lp}, we know for $p>n$ it holds that
$$\|u^{\frac{p}{2}}\|_{L^1(\O)}=\|u(\cdot, t)\|_{L^{\frac{p}{2}}(\Omega)}^{\frac{p}{2}} \leqslant c_{12} \quad \mathrm{for\ all}\ \ t \in\left(0, T_{\max }\right)$$ which updates \eqref{li9} as
\begin{equation}\label{li10}
c_{10}\int_{\Omega} u^{p+1}dx\leq  c_{13}\bigg(\left\|\nabla u^{\frac{p}{2}}\right\|_{L^{2}(\Omega)}^{\frac{2(p+1)}{p} \theta}+1\bigg) \leq
\frac{c_2(p-1)}{p^2} \left\|\nabla u^{\frac{p}{2}}\right\|_{L^{2}(\Omega)}^2+c_{14}
\end{equation}
where we have used Young's inequality based on the fact $\frac{2(p+1)}{p} \theta=\frac{2(np+2n)}{pn+2p}<2$ due to $p>n$. Then substituting \eqref{li10} into \eqref{li8} gives
\begin{eqnarray*}\label{li11}
\frac{1}{p} \frac{d}{dt} \int_{\Omega} u^p dx+\frac{1}{p}\int_{\Omega} u^p dx\leq c_{15}
\end{eqnarray*}
which by the Gronwall's inequality proves the claim \eqref{claim}. Now with \eqref{claim} and the elliptic regularity theorem, we get from the second equation of \eqref{KS-N} that $v\in W^{2,p}(\O)\hookrightarrow C^{1,1-\frac{n}{p}}(\O)$ by the Sobolev embedding theorem. Hence there exists a constant $c_{16}>0$ independent of $t$ such that
\begin{equation}\label{W1V}
\|v\|_{W^{1,\infty}(\O)} \leq c_{16} \ \ \mathrm{for \ all} \ \ t\in (0, T_{\max}).
\end{equation}
Then we apply \eqref{W1V} into \eqref{li4} and obtain
\begin{eqnarray}\label{li14}
\frac{1}{p} \frac{d}{dt} \int_{\Omega} u^p dx+\frac{2c_2(p-1)}{p^2} \int_{\Omega} |\nabla u^{\frac{p}{2}}|^2 dx\leq c_{17} \int_\O u^pdx.
\end{eqnarray}
Starting from \eqref{li14}, we can utilize the standard Moser iteration (cf. \cite{Alikakos-1979}) to prove that $\|u\|_{L^{\infty}(\O)} \leq c_{18} \  \mathrm{for \ all} \ t\in (0, T_{\max})$ (e.g. see the proof of Theorem 2.1 in \cite{Tao-Wang}). We omit the details here for brevity. Then the standard elliptic regularity applied to the second equation of \eqref{KS-N} with $u\in L^\infty(\O)$ yields that $\|v\|_{W^{1,\infty}(\O)}\leq c_{19}$ for some constant $c_{19}>0$. This completes the proof of Lemma \ref{criterion}.
\end{proof}

By the result of Lemma \ref{criterion}, to prove our results, it is the key to drive the {\it priori} inequality \eqref{Lp}. When $n<2$, \eqref{Lp} directly holds true by taking $p=1$ due to \eqref{L1}.   In the following we hence assume $n\geq 2$ and proves a useful inequality to show \eqref{Lp}.

\begin{lemma}\label{priori}
Let $\Omega \subset \R^n(n\geq 2$) be a bounded domain with smooth boundary. Let $\gamma(v)$ and $\phi(v)$ satisfy hypotheses (H1)-(H3) and  $(u,v)$ be a classical solution obtained in Lemma \ref{local} with the maximal existence time $T_{\max}\in (0,\infty]$. Then there exists some $p>\frac{n}{2}$ such that
\begin{eqnarray}\label{pei}
\frac{d}{dt} \int_{\Omega} u^p dx+\int_\O u^pdx \leq c_0+c_1 \int_\O \phi(v)^{-p}dx \ \ \ \mathrm{for\ all} \ t\in (0, T_{\max})
\end{eqnarray}
where $c_0$ and $c_1$ are positive constants depending only on $p$ and $d>0$.
\end{lemma}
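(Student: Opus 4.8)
The plan is to couple the natural $L^p$‑energy identity for $u$ with an auxiliary identity obtained by testing the \emph{elliptic} equation against $\phi(v)u^p$, and then to turn the resulting gradient cross‑term into a controllable quantity by invoking (H3). Everything is carried out on $(0,T_{\max})$, where Lemma~\ref{local} provides enough regularity for the integrations by parts below, all boundary terms vanishing by the Neumann conditions. Set $K:=\inf_{v\ge0}\gamma(v)|\phi'(v)|/|\phi(v)|^2$, which satisfies $K>\tfrac n2$ by (H3), and fix an exponent $p$ with $\tfrac n2<p<K$ (and, only when $n=3$, also $p<2$, which is compatible since then $K>\tfrac32$). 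Multiplying the first equation of \eqref{KS-N} by $u^{p-1}$ and integrating by parts gives
$$\frac1p\frac{d}{dt}\int_\O u^p\,dx = -(p-1)\int_\O \gamma(v)u^{p-2}|\nabla u|^2\,dx + (p-1)\int_\O \phi(v)u^{p-1}\nabla u\cdot\nabla v\,dx,$$
while testing $0=d\Delta v+u-v$ against $\phi(v)u^p$ and integrating by parts (using $\phi'(v)=-|\phi'(v)|$ since $\phi'<0$) yields
$$dp\int_\O \phi(v)u^{p-1}\nabla u\cdot\nabla v\,dx = d\int_\O |\phi'(v)|u^p|\nabla v|^2\,dx + \int_\O \phi(v)u^{p+1}\,dx - \int_\O v\,\phi(v)u^p\,dx.$$

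\textbf{Closing the estimate.} Write $I_1=\int\gamma u^{p-2}|\nabla u|^2$, $I_2=\int\phi u^{p-1}\nabla u\cdot\nabla v$, $I_3=\int|\phi'|u^p|\nabla v|^2$, $I_4=\int\phi u^{p+1}$, $I_5=\int v\phi u^p$; all but $I_2$ are nonnegative, and the two identities above read $\tfrac1p\tfrac{d}{dt}\int u^p=-(p-1)I_1+(p-1)I_2$ and $dpI_2=dI_3+I_4-I_5$. By Cauchy--Schwarz and then (H3), which gives $\phi^2/\gamma\le K^{-1}|\phi'|$, one has $I_2^2\le K^{-1}I_1I_3$. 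Substituting this into $dpI_2=dI_3+I_4-I_5$ and applying Young's inequality $dp\sqrt{I_1I_3/K}\le \eta dI_3+\tfrac{dp^2}{4\eta K}I_1$ with a weight $\eta\in(0,1)$ chosen so that $4\eta K(1-\eta)>p$ --- which is possible \emph{precisely because} $p<K$, since $\sup_{\eta\in(0,1)}4K\eta(1-\eta)=K$ --- gives
$$I_3 \le \frac{p^2}{4\eta K(1-\eta)}\,I_1 + \frac{1}{d(1-\eta)}\,(I_5-I_4).$$
Now insert $I_2=\tfrac1pI_3+\tfrac1{dp}(I_4-I_5)$ into the first identity and bound the (positively weighted) $I_3$ by the displayed inequality: the coefficient of $I_1$ becomes nonpositive, the $I_4,I_5$ contributions collapse to a single multiple of $I_5-I_4$, and discarding the favourable $-I_1$ term we arrive at
$$\frac{d}{dt}\int_\O u^p\,dx \le \kappa\Big(\int_\O v\,\phi(v)u^p\,dx - \int_\O \phi(v)u^{p+1}\,dx\Big),\qquad \kappa:=\frac{(p-1)\eta}{d(1-\eta)}>0.$$

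\textbf{Producing the right-hand side.} Adding $\int u^p$ and using Young's inequality with exponents $\tfrac{p+1}{p}$ and $p+1$,
$$(\kappa v\phi(v)+1)u^p \le \tfrac\kappa2\,\phi(v)u^{p+1} + C\,\frac{(\kappa v\phi(v)+1)^{p+1}}{\phi(v)^p},$$
the $\int\phi u^{p+1}$ terms cancel and we obtain $\tfrac{d}{dt}\int u^p+\int u^p\le C\int_\O (\kappa v\phi(v)+1)^{p+1}\phi(v)^{-p}\,dx$. It remains to estimate this integrand. If $n>3$, (H2)(b) gives $v\phi(v)\le L<\infty$, so $(\kappa v\phi(v)+1)^{p+1}\le(\kappa L+1)^{p+1}$ and the integral is $\le c_1\int_\O\phi(v)^{-p}\,dx$, which is \eqref{pei} with $c_0=0$. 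If $n=2,3$, convexity yields $(\kappa v\phi(v)+1)^{p+1}\le 2^p\big(\kappa^{p+1}(v\phi(v))^{p+1}+1\big)$, hence the integrand is $\le 2^p\big(\kappa^{p+1}\phi(0)\,v^{p+1}+\phi(v)^{-p}\big)$ (using that $\phi$ is nonincreasing); since $\|u\|_{L^1}=m$ is conserved by \eqref{L1}, elliptic regularity for the second equation (Lemma~\ref{regularity}) bounds $\int_\O v^{p+1}\,dx$ uniformly in $t$ --- this is where $p<2$ is used for $n=3$, and it is automatic for $n=2$ --- so again \eqref{pei} follows, now with both $c_0,c_1>0$.

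\textbf{Main obstacle.} The crux is the second step: the cross‑term $I_2$ together with the elliptic equation unavoidably generates the \emph{unfavourable} term $I_3=\int|\phi'|u^p|\nabla v|^2$, and it must be dominated \emph{without consuming} the \emph{favourable} term $-I_4=-\int\phi u^{p+1}$, which is indispensable in the last step for absorbing the super‑$p$ growth. This is exactly what (H3) provides, and quantitatively so: the weight $\eta$ can be chosen to neutralize the $I_1$‑contribution \emph{if and only if} $p<K$, so the strict inequality $K>\tfrac n2$ in (H3) is precisely what keeps the admissible range of $p$ (which must contain a point $>\tfrac n2$) nonempty. A secondary, more technical annoyance is that the source term produced in the last step is $\int(\kappa v\phi+1)^{p+1}\phi^{-p}$ rather than $\int\phi^{-p}$, forcing the case distinction between $n>3$ (where (H2)(b) makes $v\phi$ bounded) and $n\le 3$ (where one falls back on the a priori $L^{p+1}$‑bound for $v$).
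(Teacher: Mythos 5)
Your proposal is correct and follows essentially the same route as the paper: you test the parabolic equation with $u^{p-1}$ and the elliptic equation with a multiple of $\phi(v)u^p$, use (H3) with the threshold $p$ between $\tfrac n2$ and $\inf_{v\ge0}\gamma|\phi'|/\phi^2$ to neutralize the gradient cross-term, and then apply Young's inequality together with the same dimensional case split ((H2)(b) for $n>3$, the $L^{p+1}$-bound on $v$ from Lemma \ref{regularity} for $n=2,3$) to produce $c_0+c_1\int_\O\phi(v)^{-p}dx$. The only difference is cosmetic: the paper packages the cancellation as positive semi-definiteness of the quadratic form $A|\vec z_1|^2+2B\vec z_1\vec z_2+C|\vec z_2|^2$ via the discriminant condition $B^2\le AC$, while you achieve the identical cancellation by Cauchy--Schwarz on $I_2$ followed by a weighted Young inequality.
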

\begin{proof}
Multiplying the first equation of \eqref{KS-N} by $ u^{p-1}$ $(p>1)$ and recalling \eqref{li1}, we have
\begin{eqnarray}\label{2}
\begin{split}
\frac{1}{p} \frac{d}{dt} \int_{\Omega} u^p dx
= -(p-1) \int_{\Omega} \gamma(v)u^{p-2} |\nabla u|^2 dx+ (p-1) \int_{\Omega} \phi(v)u^{p-1} \nabla u\nabla v dx.
\end{split}
\end{eqnarray}
Then we multiply the second equation of \eqref{KS-N} by $ -\frac{p-1}{pd} u^p \phi(v)$ $ (p>1)$ to get
\begin{eqnarray}\label{3}
\begin{split}
0=& -\frac{p-1}{pd} \int_{\Omega}u^p \phi(v) (d\Delta v+u-v)dx\\
=& \frac{p-1}{p} \int_{\Omega} \nabla v(pu^{p-1}\nabla u \phi(v)+u^p\phi'(v)\nabla v)dx-\frac{p-1}{pd}\int_{\Omega} u^p \phi(v)(u-v)dx\\
=&(p-1)\int_{\Omega}\phi(v)u^{p-1}\nabla u\nabla v dx+\frac{p-1}{p}\int_{\Omega}u^p \phi'(v) |\nabla v|^2 dx\\
&-\frac{p-1}{pd}\int_{\Omega} u^p\phi(v)(u-v) dx.
\end{split}
\end{eqnarray}
Combining \eqref{2} with \eqref{3}, one has
\begin{eqnarray}\label{4}
\begin{split}
\frac{1}{p} \frac {d}{dt} \int_{\Omega} u^p dx =& -(p-1) \int_{\Omega} \gamma(v)u^{p-2} | \nabla u|^2 dx+2 (p-1) \int_{\Omega} \phi(v) u^{p-1} \nabla u \nabla v dx\\
&+ \frac {p-1}{p} \int_{\Omega} u^p\phi'(v) |\nabla v|^2 dx- \frac {p-1}{pd}\int_{\Omega} u^p \phi(v) (u-v)dx.
\end{split}
\end{eqnarray}
Let's define
 $$ A =(p-1) \gamma(v) >0, \ B= -(p-1) \phi(v) <0, \ C= -\frac{p-1}{p} \phi'(v) =\frac{p-1}{p} |\phi'(v)|>0$$
 and
$$ \vec{z_1} =u^{\frac {p}{2}-1} \nabla u, \ \vec{z_2}=u^{\frac {p}{2}}\nabla v.$$
Then \eqref{4} can be rewritten as
\begin{eqnarray} \label{5}
\begin{split}
\frac {1}{p}\frac {d}{dt}\int_{\Omega} u^p dx+\int_{\Omega} (A|\vec{z_1}|^2 +2B\vec{z_1}\vec{z_2}+C|\vec{z_2}|^2)dx
=-\frac {p-1}{pd}\int_{\Omega} u^p \phi(v)(u-v) dx.
\end{split}
\end{eqnarray}
Since $ A, C>0$, then
\begin{eqnarray*}
\begin{split}
A|\vec{z_1}|^2 +2B\vec{z_1}\vec{z_2} +C|\vec{z_2}|^2 \geq 0 &\iff B^2 -AC\leq0\\
&\iff p|\phi(v)|^2 \leq -\gamma(v)\phi'(v)=\gamma(v) |\phi'(v)|
\end{split}
\end{eqnarray*}
Under the hypothesis (H3),  we let $p$ be such that
\begin{eqnarray} \label{6}
\frac{n}{2}<p \leq\inf_{v \geq 0} \frac {\gamma(v)|\phi'(v)|}{|\phi(v)|^2}.
\end{eqnarray}
With \eqref{6}, if we define
\begin{eqnarray*}
\begin{aligned}
&\rho_1(v)=\frac {AC-B^2}{2C}=\frac{p-1}{2}\frac{|\phi(v)|^2}{|\phi'(v)|}\bigg(\frac {\gamma(v)|\phi'(v)|}{|\phi(v)|^2}-p\bigg),\\[1mm]
&\rho_2(v)=\frac {AC-B^2}{2A}=\frac{p-1}{2p}\frac{|\phi(v)|^2}{\gamma(v)}\bigg(\frac {\gamma(v)|\phi'(v)|}{|\phi(v)|^2}-p\bigg),
\end{aligned}
\end{eqnarray*}
then $\rho_1(v)\geq 0$ and $\rho_2(v)\geq 0$ for all $v\geq 0$ such that
$$ A|\vec{z_1}|^2 +2B \vec{z_1}\vec{z_2} +C|\vec{z_2}|^2 \geq \rho_1(v)|\vec{z_1}|^2+\rho_2(v)|\vec{z_2}|^2.$$
Thus it follows from \eqref{5} that
\begin{eqnarray*}\label{i3}
\begin{split}
&\frac{1}{p} \frac {d}{dt} \int_{\Omega} u^p dx +\int_{\Omega} (\rho_1(v){u^{p-2} |\nabla u|^2} +\rho_2(v) u^p|\nabla v|^2)dx\\
\leq &-\frac {p-1}{pd}  \int_{\Omega} u^p \phi(v)(u-v) dx\\
\leq &-\frac{p-1}{pd} \int_{\Omega} \phi(v)u^{p+1} dx+\frac{p-1}{pd} \int_{\Omega} u^p v\phi(v) dx.
\end{split}
\end{eqnarray*}
With the fact $\rho_i(v)\geq 0 (i=1,2)$, we add $\frac{1}{p}\int_\O u^pdx$ into the above inequality and obtain
\begin{eqnarray}\label{i3-1}
\begin{split}
\frac {d}{dt} \int_{\Omega} u^p dx+\int_\O u^pdx \leq -\frac{p-1}{d}  \int_{\Omega} \phi(v)u^{p+1}dx+\int_\O u^pdx+\frac{p-1}{d}\int_{\Omega} u^p v\phi(v)dx.
\end{split}
\end{eqnarray}
Owing to the Young's inequality, we have
\begin{eqnarray*}
\int_\O u^pdx \leq \frac{p-1}{2d}\int_\O \phi(v)u^{p+1}dx+c_1(p,d)\int_\O\phi(v)^{-p}dx
\end{eqnarray*}
which along with \eqref{i3-1} leads to
\begin{eqnarray}\label{i4n}
\begin{aligned}
\frac {d}{dt} &\int_{\Omega} u^p dx+\int_\O u^pdx +\frac{p-1}{2d}  \int_{\Omega} \phi(v)u^{p+1}dx\\
& \leq \frac{p-1}{d}\int_{\Omega} u^p v\phi(v)dx+c_1(p,d)\int_\O\phi(v)^{-p}dx.
\end{aligned}
\end{eqnarray}
Now we proceed to  estimate the first term on the right hand side of \eqref{i4n}.

{\bf Case 1} ($2\leq n\leq 3$). In this case, we employ Young's inequality to have
\begin{equation}\label{i3n}
\int_\O u^pv\phi(v)dx\leq \frac{1}{4}\int_\O\phi(v)u^{p+1}dx+c_2(p)\int_\O \phi(v)v^{p+1}dx
\end{equation}
where $c_2(p)=\frac{(4p)^p}{(p+1)^{p+1}}$. Then applying \eqref{i3n} into \eqref{i4n} yields
\begin{eqnarray}\label{i4}
\begin{aligned}
\frac {d}{dt} &\int_{\Omega} u^p dx+\int_\O u^pdx +\frac{p-1}{4d}  \int_{\Omega} \phi(v)u^{p+1}dx\\
& \leq c_1(p)\int_\O\phi(v)^{-p}dx+\frac{(p-1)c_2}{d}\int_\O \phi(v)v^{p+1}dx.
\end{aligned}
\end{eqnarray}
Thanks to the hypothesis (H2-(a)) and \eqref{lower}, we can find a constant $c_3>0$ so that $|\phi(v)|\leq c_3=\chi(\eta)$. Since $\frac{n}{2}<\frac{2}{n-2}$ for $n=2,3$, we can pick $p=\frac{n}{2}+\varepsilon$ with small $\varepsilon>0$ satisfying $p+1<\frac{n}{n-2}$. Therefore applying Lemma \ref{regularity} with the fact \eqref{L1}, we get a constant $c_5>0$ such that
$$\int_{\Omega} \phi(v)v^{p+1}dx  \leq c_3 \int_{\Omega} v^{p+1}dx  \leq c_4,  \ \mathrm{for} \ p=\frac{p}{2}+\varepsilon$$
which, upon a substitution into \eqref{i4}, yields a constant $c_5(p)$ such that
\begin{eqnarray*}\label{i5}
\begin{split}
\frac {d}{dt} \int_{\Omega} u^p dx+\int_\O u^pdx \leq c_5(p)+c_1(p)\int_\O\phi(v)^{-p}dx.
\end{split}
\end{eqnarray*}
This gives \eqref{pei}.

{\bf Case 2} ($n>3$). In this case, we employ the hypothesis (H2-(b)) and \eqref{lower} to find a constant $c_6>0$ such that $|v\phi(v)|<c_6$ and hence
\begin{eqnarray}\label{i6}
\begin{aligned}
\int_{\Omega} u^p v\phi(v)dx \leq c_6 \int_{\Omega} u^p dx &\leq \frac{1}{4}\int_\O \phi(v)u^{p+1}dx+c_7(p)\int_\O\phi(v)^{-p}dx
\end{aligned}
\end{eqnarray}
where the Young's inequality has been used and $c_7(p)>0$ is positive constant. Substituting \eqref{i6} into \eqref{i4n} yields a constant $c_8(p)=c_1(p,d)+\frac{(p-1)c_7}{d}>0$ such that
\begin{eqnarray*}\label{i7}
\frac{d}{dt} \int_{\Omega} u^p dx+\int_\O u^pdx +\frac{p-1}{4d}  \int_{\Omega} \phi(v)u^{p+1}dx
\leq c_8(p)\int_\O\phi(v)^{-p}dx
\end{eqnarray*}
which gives \eqref{pei}. The proof of Lemma \ref{priori} is completed.

\end{proof}

\subsection{Proof of main results} We are in a position to prove our main results.
\bigbreak

\noindent {\bf Proof of Theorem \ref{BS}}.
By Lemma \ref{criterion}, it remains only to show \eqref{Lp} holds. When $0<n<2$, \eqref{Lp} directly holds true by taking $p=1$ due to \eqref{L1}. Now we consider the case $n\geq 2$. Under the condition \eqref{condition}, we can find a constant $C_1>0$ from Lemma \ref{priori} such that
$$\frac{d}{dt} \int_{\Omega} u^p dx+\int_\O u^pdx<C_1.$$
This along with the Gronwall's inequality gives
$$\|u\|_{L^p(\O)}<C_2, \ \mathrm{for \ some}\ p>\frac{n}{2}$$
for some constant $C_2>0$. Then Theorem \ref{BS} follows immediately from Lemma \ref{criterion}.

\bigbreak

\noindent {\bf Proof of Theorem \ref{BSN}}. We consider the case of algebraically and exponentially decay  motility functions separately.

{\underline{Case 1 (algebraic decay)}}.  For convenience, we rewrite \eqref{rates1n} below
\begin{equation*}\label{rates1}
\gamma(v)=\frac{\sigma_1}{v^{\lambda_1}},\ \ \phi(v)=\frac{\sigma_2}{v^{\lambda_2}}, \ \sigma_1, \sigma_2>0, \lambda_1>0, \lambda_2>1.
\end{equation*}
Then the relation \eqref{dchi} is recovered when $\sigma_2=(1-\alpha)\sigma_1 \lambda_1$ and $\lambda_2=\lambda_1+1$.

Clearly the hypotheses (H1)-(H2) are satisfied. We next check the hypothesis (H3). Simple computation gives
$$\frac{\gamma(v) |\phi'(v)|}{|\phi(v)|^2}=\frac{\sigma_1 \lambda_2}{\sigma_2}v^{\lambda_2-\lambda_1-1}.$$
Hence the hypothesis (H3) with \eqref{lower} requires $\lambda_2 \geq \lambda_1+1$ and
\begin{equation}\label{hypo1}
\frac{\sigma_1 \lambda_2}{\sigma_2} >
\begin{cases}
\frac{n}{2}\ & \ \text{if} \ \lambda_2=\lambda_1+1,\\[2mm]
\frac{n}{2} \eta^{\lambda_1+1-\lambda_2}\ & \ \text{if} \ \lambda_2>\lambda_1+1.
 \end{cases}
\end{equation}
To get the global existence, it remains to verify the criterion \eqref{Lp}. We proceed with the following.

{\it Case a ($n=2$)}. When $n=2$, from Lemma \ref{regularity}, it clearly has that
\begin{equation}\label{e1}
\int_\O \phi(v)^{-p}dx=\sigma_2^{-p}\int_\O v^{\lambda_2 p}dx<\infty,\ \mathrm{for \ some} \ p>\frac{n}{2}.
\end{equation}
Then substituting \eqref{e1} into \eqref{pei} and using the Gronwall's inequality, we get \eqref{Lp} immediately.

{\it Case b ($n>2$)}. By the elliptic regularity theorem \cite{ADN-1959,ADN-1964} applied to the second equation of \eqref{KS-N}, we have $\|v\|_{W^{2,p}(\O)}\leq C_0\|u\|_{L^p(\O)}$ for some constant $C_0>0$, which along with the Sobolev embedding theorem yields
\begin{equation}\label{i12}
\|v\|_{L^\infty(\O)} \leq C_1\|u\|_{L^p(\O)}, \ \ \mathrm{for \ some}\  p>\frac{n}{2}
\end{equation}
with some constant $C_1>0$. Next we split the analysis into two cases. (1) If $\lambda_2<\frac{2}{n-2}$, then we can pick $p=\frac{n}{2}+\varepsilon$ with $0<\varepsilon<\frac{n}{\lambda_2(n-2)}-\frac{n}{2}$ such that $\lambda_2 p<\frac{n}{n-2}$, which together with \eqref{L1} and Lemma \ref{regularity} gives $\int_\O v^{\lambda_2 p}dx<C_2$ for some constant $C_2>0$. By the same argument as in {\it Case a}, we get \eqref{Lp}. (2) If $\lambda_2\geq \frac{2}{n-2}$,  we have $\lambda_2 p>\frac{n}{n-2}$ since $p>\frac{n}{2}$. Furthermore if we let $\lambda_2<\frac{n}{n-2}$, then $\frac{n}{2}(\lambda_2-1)<\frac{n}{n-2}$. Now choose $q>1$ such that  $\frac{n}{2}(\lambda_2-1)\leq q<\frac{n}{n-2}$, and one can check that $\theta=\lambda_2 p-q<p$ whenever $p>\frac{n}{2}$. Thus by the $L^p$-interpolation inequality, we have
$$\int_\O v^{\lambda_2 p}dx=\|v\|_{L^{\lambda_2 p}(\O)}^{\lambda_2 p} \leq \|v\|_{L^q(\O)}^q \|v\|_{L^\infty(\O)}^\theta.$$
This along with \eqref{i12}, Lemma \ref{regularity} with the fact $u\in L^1(\O)$ (see \eqref{L1}) as well as the Young's inequality gives
\begin{equation}\label{i13}
\int_\O \phi(v)^{-p}dx=\sigma_2^{-p} \int_\O v^{\lambda_2p}dx\leq C_3\|u\|_{L^p(\O)}^\theta\leq C_4+\frac{1}{2}\|u\|_{L^p(\O)}^p
\end{equation}
for some constants $C_3, C_4>0$. Then substituting \eqref{i13} into \eqref{pei} yields a constant $C_5>0$ such that
\begin{eqnarray*}\label{i11n}
\frac{d}{dt} \int_{\Omega} u^p dx+\frac{1}{2}\int_\O u^pdx \leq  C_5
\end{eqnarray*}
which again by the Gronwall's inequality gives \eqref{Lp}. In summary,  with \eqref{hypo1} we get \eqref{Lp} for any $0<\lambda_2<\frac{n}{n-2}$. Noticing that $\lambda_2<\frac{n}{n-2} (n\geq 2)$ is equivalent to $\frac{\lambda_2}{\lambda_2-1}>\frac{n}{2}$, and combining with \eqref{hypo1}, we get the condition \eqref{con1} for the global existence of solutions to \eqref{KS-N} with \eqref{rates1n}.
This finishes the proof for { Case 1}.

{\underline{Case 2 (exponential decay)}}. For convenience, we recast \eqref{rates2n} as follows
\begin{equation*}\label{rates2}
\gamma(v)=\exp(-\chi_1 v),\ \ \phi(v)=\delta \exp(-\chi_2 v), \ \ \ \chi_1>0, \chi_2>0.
\end{equation*}
By a direct computation, we have
$$\frac{\gamma(v) |\phi'(v)|}{|\phi(v)|^2}=\frac{\chi_2}{\delta} \exp((\chi_2-\chi_1)v)$$
which subject to hypothesis (H3) and \eqref{lower} impose the conditions on $\chi_i (i=1,2)$ as
\begin{equation*}\label{hypo2}
\chi_2\geq \chi_1, \ \mathrm{and} \
\chi_2>
\begin{cases}
\frac{n\delta}{2}\ & \ \text{if} \ \chi_1=\chi_2\\[2mm]
\frac{n\delta}{2} \exp\{(\chi_1-\chi_2)\eta\}\ & \ \text{if} \ \chi_1<\chi_2.
 \end{cases}
\end{equation*}
Next we only need to estimate $\int_\O\phi(v)^{-p}dx =\delta^{-p} \int_\O e^{\chi_2 p v}dx$. In this scenario, we focus on the case $n\leq 2$ and the case $n>3$ is still open.

When $n<2$, we have $\|\nabla v\|_{L^n(\O)} \leq C\|u\|_{L^1(\O)}=C\|u_0\|_{L^1(\O)}$ (cf. \cite[(2.11)]{Nagai-2001}). Noticing that $\|v\|_{L^1(\O)}$ is obtained directly by integrating the second equation of \eqref{KS-N}
\begin{equation*}\label{massv}
\|v\|_{L^1(\O)}=\|u\|_{L^1(\O)}=\|u_0\|_{L^1(\O)}.
\end{equation*}
Then by the Trudinger-Moser inequality (see Lemma \ref{moser}), we obtain that
\begin{equation}\label{mosern0}
\int_\O\phi(v)^{-p}dx =\delta^{-p}\int_\O \exp(\chi_2 p v)dx \leq c_0\exp(c_1\|\nabla v\|_{L^n}^n+c_2\|v\|_{L^1}^n)<\infty, \ n<2
\end{equation}
for some constant $c_i (i=0,1,2)$ depending on $n, p, \chi_2$.

When $n=2$, we let $p=\frac{n}{2}+\varepsilon=1+\varepsilon$ with $0<\varepsilon <\frac{\frac{4\pi \d}{m}-\chi_2}{\chi_2}$ under the assumption $\chi_2<\frac{4\pi \d}{m}$. Then we have $\chi_2 p=\chi_2(1+\varepsilon)<\frac{4\pi \d}{m}$ and hence it follows from Lemma \ref{Merle} that
\begin{equation}\label{mosern1}
\int_\O\phi(v)^{-p}dx =\delta^{-p}\int_\O \exp(\chi_2 p v)dx <\infty, \ n=2.
\end{equation}
Feeding \eqref{pei} on \eqref{mosern0} or \eqref{mosern1} and applying the Gronwall's inequality, we have $\|u\|_{L^p(\O)}\leq c_5$ for some $p>\frac{n}{2}$. This along with Lemma \ref{criterion} finishes the proof of {Case 2} and hence of Theorem \ref{BSN}.

\bigbreak

\noindent {\bf Proof of Theorem \ref{BSNN}}. We consider two cases separately.

(i) If $\phi(v)=(\alpha-1)\gamma'(v)$ with $\gamma(v)=\frac{\sigma}{v^\lambda}$, which is a particular case of \eqref{rates1n} with $\lambda_2=1+\lambda_1, \sigma_2=(1-\alpha)\lambda_1 \sigma_1, \lambda_1=\lambda, \sigma_1=\sigma$. Then the condition \eqref{con1} becomes
\begin{equation}\label{in1}
\frac{\lambda}{1+\lambda} \cdot \frac{n}{2}<\min\Big\{1,\frac{1}{1-\alpha}\Big\}.
\end{equation}
If $\alpha<0$, then \eqref{in1} $\Leftrightarrow \frac{\lambda}{1+\lambda} \cdot \frac{n}{2}<\frac{1}{1-\alpha} \Leftrightarrow \lambda<\frac{2}{n(1-\alpha)-2}$. While if $0\leq \alpha<1$, then \eqref{in1} $\Leftrightarrow \frac{\lambda}{1+\lambda} \cdot \frac{n}{2}<1 \Leftrightarrow \lambda<\frac{2}{n-2}$. This gives \eqref{gcon} and hence completes the proof of case (i).

(ii) If $\phi(v)=(\alpha-1)\gamma'(v)$ with $\gamma(v)=e^{-\chi v}$, which corresponds to $\chi_2=\chi_1=\chi, \delta=(1-\alpha)\chi$ in \eqref{rates2n}. Then the condition $\frac{n\delta}{2} \exp\{(\chi_1-\chi_2)\eta\}<\chi_2$ in \eqref{con2} with $n=2$ requires $0<\alpha<1$. This along with the condition $\chi_2<\frac{4\pi d}{m}$ completes the proof of case (ii).

\section{Stationary solutions}
In this section, we shall explore the non-constant stationary solutions to the Keller-Segel system \eqref{KS-N} with \eqref{dchi}. First notice that the cell mass is conserved in the time-dependent problem, see \eqref{L1}. Hence  the relevant stationary problem reads as
\begin{eqnarray}\label{KS-S}
\begin{cases}
\nabla \cdot (\gamma(v)\nabla u-u \phi(v)\nabla v)=0,  &x\in\Omega,\\
\d\Delta v+u-v=0, &x\in\Omega, \\
\partial_{\nu}u=\partial_{\nu} v=0 &x\in \partial \Omega,\\
\int_\O u(x)dx=m
\end{cases}
\end{eqnarray}
where $m>0$ is a constant denoting the cell mass and
\begin{equation}\label{dchin}
\phi(v)=\beta \gamma'(v), \ \ \beta=\alpha-1.
\end{equation}
Substituting \eqref{dchin} into \eqref{KS-S}, we find that the first equation of \eqref{KS-S} may be written as
\begin{equation}\label{KS-S1}
\nabla \cdot \Big(u\gamma(v)\nabla \ln \frac{u}{\gamma(v)^\beta}\Big)=0.
\end{equation}
Multiplying \eqref{KS-S1} by $\ln\frac{u}{\gamma(v)^\beta}$ and integrating the resulting equations by parts along with the Neumann boundary conditions, we get
$$\int_\O u\gamma(v)\Big|\nabla \ln \frac{u}{\gamma(v)^\beta}\Big|^2dx=0$$
which immediately yields $\nabla \ln \frac{u}{\gamma(v)^\beta}=0$ and hence
$$u(x)=\theta \gamma(v)^\beta $$
where $\theta>0$ is a constant. With the mass constraint given in the fourth equation of \eqref{KS-S}, we integrate the above equation and get
$$\theta=\frac{m}{\int_\O \gamma(v)^\beta dx}.$$
We thus reduce the stationary system \eqref{KS-S} into a non-local semi-linear problem
\begin{eqnarray}\label{KS-S2}
\begin{cases}
\D \d \Delta v -v+\frac{m}{\int_\O \gamma(v)^{\beta} dx}\gamma(v)^{\beta}=0, &x\in\Omega, \\
\partial_{\nu} v=0 &x\in \partial \Omega
\end{cases}
\end{eqnarray}
with
$$u(x)=\frac{m}{\int_\O \gamma(v)^\beta dx} \gamma(v)^\beta.$$
In order to get some specific results, we need to specify the form of $\gamma(v)$ for which we consider two cases: algebraically and exponentially decay functions. We have the following results.

\begin{theorem}\label{th-ss}
Let $\alpha<1$. Then the following results hold.
\begin{itemize}
\item[(a)] Consider $\gamma(v)=\frac{\sigma}{v^\lambda} (\sigma, \lambda>0)$. If $(1-\alpha)\lambda>1$ when $n=1,2$ and $1<(1-\alpha)\lambda< \frac{n+2}{n-2}$ when $n\geq 3$,  there are constants $0<d_0<d_1$ depending on the domain $\Omega$ such that \eqref{KS-S2} admits a non-constant solution whenever $d<d_0$ and only constant solution if $d>d_1$. If $0<(1-\alpha)\lambda\leq 1$, then the constant $v=\frac{m}{|\Omega|}$ is the only nonnegative solution of \eqref{KS-S2} for any $d>0$.
\item[(b)] Consider $\gamma(v)=e^{-\chi v} (\chi>0)$ and let $\O$ be a disc in $\R^2$. Then the problem \eqref{KS-S2} admits a non-constant radial solution if $m>\frac{8\pi d }{\chi (1-\alpha)}$, while the constant $v=\frac{m}{|\Omega|}$ is the only radial solution to \eqref{KS-S2} if $m<\frac{8\pi d}{\chi (1-\alpha)}$.
\end{itemize}
\end{theorem}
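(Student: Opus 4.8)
The plan rests on the reduction already in place: by the computation preceding \eqref{KS-S2}, a non-constant steady state of \eqref{KS-N} is exactly a non-constant positive solution $v$ of the scalar nonlocal problem \eqref{KS-S2}, and in each of the two cases the nonlocal coefficient can be absorbed by a scaling, turning \eqref{KS-S2} into a local semilinear Neumann problem (carrying, in case (b), an auxiliary normalization constant). So the whole theorem amounts to analyzing these reduced problems.

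\textbf{Part (a).} Inserting $\gamma(v)=\sigma v^{-\lambda}$ gives $\gamma(v)^{\beta}=\sigma^{\beta}v^{q}$ with $q:=(1-\alpha)\lambda>0$, so \eqref{KS-S2} becomes $d\Delta v-v+\big(m/\!\int_{\Omega}v^{q}\big)v^{q}=0$, $\partial_{\nu}v=0$. The first step is the observation that if $v$ solves this and $K:=\int_{\Omega}v^{q}\,dx$, then $w:=(m/K)^{1/(q-1)}v$ solves the parameter-free problem
\[
d\Delta w-w+w^{q}=0 \ \text{ in }\Omega,\qquad \partial_{\nu}w=0 \ \text{ on }\partial\Omega,\qquad w>0,
\]
and conversely $v=\big(m/\!\int_{\Omega}w^{q}\big)w$ recovers a solution of \eqref{KS-S2} from any positive solution $w$; hence the two problems have non-constant solutions for precisely the same values of $d$. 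For $0<q\le1$ I would argue by the maximum principle: Hopf's lemma excludes extrema of $w$ on $\partial\Omega$ unless $w$ is constant, and evaluating $d\Delta w=w-w^{q}$ at an interior maximum and an interior minimum, using that $t\mapsto t^{q-1}$ is non-increasing, forces $\max w\le1\le\min w$, i.e.\ $w\equiv1$ (with $q=1$ trivial, $w$ being harmonic), whence $v\equiv m/|\Omega|$. For $1<q$ (and $q<\frac{n+2}{n-2}$ when $n\ge3$) the displayed problem is the classical Neumann problem of Lin--Ni--Takagi: it has a non-constant mountain-pass solution for all small $d$, obtained by applying the mountain-pass theorem to $w\mapsto\frac d2\int_{\Omega}|\nabla w|^{2}+\frac12\int_{\Omega}w^{2}-\frac1{q+1}\int_{\Omega}(w^{+})^{q+1}$ on $H^{1}(\Omega)$ together with the energy estimate that keeps the critical level strictly below the threshold forcing the critical point to be constant; and for all large $d$ only the constant survives, which follows by testing the equation against $w-\overline w$, using a $d$-uniform a priori $L^{\infty}$ bound (from subcriticality) and the Poincaré inequality. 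Undoing $q=(1-\alpha)\lambda$ then yields the stated ranges of $\lambda$ and the existence of $d_{0}<d_{1}$ in Theorem~\ref{th-ss}(a).

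\textbf{Part (b).} Inserting $\gamma(v)=e^{-\chi v}$ gives $\gamma(v)^{\beta}=e^{\mu v}$ with $\mu:=(1-\alpha)\chi>0$, so on the disc $\Omega\subset\R^{2}$, writing $\lambda:=m/\!\int_{\Omega}e^{\mu v}\,dx$, the problem \eqref{KS-S2} is equivalent to $d\Delta v-v+\lambda e^{\mu v}=0$, $\partial_{\nu}v=0$, together with $\lambda\int_{\Omega}e^{\mu v}=m$; integrating the equation shows the last identity is the same as $\int_{\Omega}v=m$, and one notes that $v\equiv m/|\Omega|$ always solves it. Restricting to $H^{1}_{\mathrm{rad}}(\Omega)$, where these are the Euler--Lagrange equations of $E_{d}(v)=\frac d2\int_{\Omega}|\nabla v|^{2}+\frac12\int_{\Omega}v^{2}-\frac m\mu\ln\!\big(\tfrac1{|\Omega|}\int_{\Omega}e^{\mu v}\big)$, I would apply the sharp Moser--Trudinger inequality (Lemma~\ref{moser}; in the mean-value-zero form $\int_{\Omega}e^{u}\le C\exp(\tfrac1{16\pi}\|\nabla u\|_{L^{2}}^{2})$ for $\int_{\Omega}u=0$) to $u=\mu(v-\overline v)$ and combine it with $\ln\!\big(\tfrac1{|\Omega|}\int e^{\mu v}\big)=\mu\overline v+\ln\!\big(\tfrac1{|\Omega|}\int e^{\mu(v-\overline v)}\big)$ to get $E_{d}(v)\ge\big(\tfrac d2-\tfrac{m\mu}{16\pi}\big)\int_{\Omega}|\nabla v|^{2}+\big(\tfrac12\|v\|_{L^{2}}^{2}-m\overline v\big)-C$. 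Hence $E_{d}$ is coercive on $H^{1}_{\mathrm{rad}}$ when $m<8\pi d/\mu=8\pi d/(\chi(1-\alpha))$; combining this with a Rellich--Pohozaev identity for radial solutions (which, under $\int_{\Omega}v=m$, is incompatible with $v'\not\equiv0$ in this range) gives that $v\equiv m/|\Omega|$ is then the only radial solution. For $m>8\pi d/\mu$, where the sharp inequality no longer controls $E_{d}$, a non-constant radial solution is produced by a variational/topological scheme in the supercritical regime — a constrained min-max for $E_{d}$ exploiting the loss of compactness at the Moser--Trudinger threshold, or, since the domain is a disc, an ODE shooting argument on $(0,R)$ for the pair $(v,\lambda)$.

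\textbf{Main obstacle.} I expect the real work to be concentrated in two places. In (a), the two facts imported from Lin--Ni--Takagi theory: the energy estimate localizing the mountain-pass level below the ``constant'' level for small $d$, and the $d$-uniform a priori $L^{\infty}$ bound behind the large-$d$ non-existence. In (b), pinning the transition to the \emph{sharp} value $8\pi d/(\chi(1-\alpha))$: the coercive half is a clean consequence of the best Moser--Trudinger constant $1/(16\pi)$, but the non-existence of non-constant radial solutions below the threshold (which needs genuine uniqueness, not merely that the minimizer is constant) and the explicit construction of one above it are the technically demanding steps.
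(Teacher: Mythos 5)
Your reduction is exactly the paper's: absorb the nonlocal coefficient by the scaling $w=(m/K)^{1/(q-1)}v$ to pass from \eqref{KS-S2} to the local Lin--Ni--Takagi problem \eqref{KS-S4} in case (a), and use the change of variables $\tilde v=\chi(1-\alpha)v$, $\tilde m=\chi(1-\alpha)m$ to reach the mean-field problem \eqref{KS-S3n1} in case (b). The difference is what happens after the reduction. The paper simply cites the literature: Lemma \ref{existence-w} (from \cite{Lin-Ni1,YK-AAM-2017}) for the $d_0<d_1$ dichotomy and the $0<k\le1$ rigidity, and Lemma \ref{existence-n} (\cite[Theorem 4]{Senba-Suzuki-2000}) for the sharp radial threshold $\tilde m=8\pi d$. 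You instead sketch direct proofs of these imported facts. For (a) your sketches are the standard ones and are sound; your maximum-principle argument for $0<q\le1$ is in fact cleaner than the paper's, which rescales by $\xi^{1/(k-1)}$ and therefore silently excludes $k=1$, a case your harmonicity remark handles. For (b) your Moser--Trudinger computation with constant $1/(16\pi)$ on $H^1_{\mathrm{rad}}$ correctly reproduces the coercivity threshold $m<8\pi d/(\chi(1-\alpha))$, but the two steps you flag as the main obstacle --- genuine uniqueness of the constant among \emph{all} radial critical points below the threshold, and the construction of a non-constant radial solution above it --- are left as a program (a Pohozaev identity ``incompatible with $v'\not\equiv0$'' and an unspecified min-max or shooting scheme). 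Those two steps are precisely the content of \cite[Theorem 4]{Senba-Suzuki-2000}, so the honest way to close your argument is to cite it, as the paper does; what your route buys in exchange is a self-contained and slightly more robust treatment of part (a).
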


\subsection{Motility with algebraic decay} Assuming $\gamma(v)=\frac{\sigma}{v^\lambda} (\sigma, \lambda>0)$, the stationary problem \eqref{KS-S2} becomes
\begin{eqnarray}\label{KS-S3}
\begin{cases}
\D \d\Delta v -v+\frac{m}{\int_\O v^{k}  dx}v^{k}=0, &x\in\Omega, \\
\partial_{\nu} v=0 &x\in \partial \Omega
\end{cases}
\end{eqnarray}
where assume that $k=-\lambda \beta=\lambda(1-\alpha)>0$. To the best of our knowledge, the existence of non-trivial solutions to the non-local problem \eqref{KS-S3} was missing in the literature. Below we shall show the existence of solutions to \eqref{KS-S3} via the following localized problem
\begin{eqnarray}\label{KS-S4}
\begin{cases}
\D \d \Delta w -w+ w^{k}=0, &x\in\Omega, \\
\partial_{\nu} w=0, &x\in \partial \Omega
\end{cases}
\end{eqnarray}
which has been widely studied in the literature (cf. \cite{Lin-Ni1, Ni, Ni-T1, Ni-T2, Ni-T3}). The most prominent feature of \eqref{KS-S3} is that its solutions possess point condensation phenomena meaning that the solutions aggregate at finite number of points and tend to zero elsewhere as $\d \to 0$. Moreover when $\d$ is small, \eqref{KS-S4}  has a non-constant least energy solution which has exactly one local maximum  on the boundary and is considered to be the most stable one among all possible non-constant solutions. We cite the following well-known results (cf. \cite{Lin-Ni1, YK-AAM-2017}).

\begin{lemma}\label{existence-w}
Let $k>1$ if $n=1,2$ and $1<k< \frac{n+2}{n-2}$ if $n\geq 3$. Then there are constants $0<d_0<d_1$ depending on the domain $\Omega$ such that \eqref{KS-S4} admits a non-constant solution whenever $d<d_0$ and only constant solution if $d>d_1$. If $0<k\leq 1$, the constant $w=1$ is the only nonnegative solution to \eqref{KS-S3} for any $d>0$.
\end{lemma}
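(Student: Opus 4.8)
The plan is to prove the three assertions separately, after observing that for $k\neq1$ the nonlocal problem \eqref{KS-S3} and the local problem \eqref{KS-S4} are equivalent up to a positive rescaling: if $v\not\equiv0$ solves \eqref{KS-S3} and $\mu=m/\int_\Omega v^{k}\,dx>0$, then $w=\mu^{1/(k-1)}v$ solves \eqref{KS-S4}, while conversely $v=\big(m/\int_\Omega w^{k}\,dx\big)\,w$ recovers a solution of \eqref{KS-S3}; the only constant nonnegative solutions of \eqref{KS-S4} are $w\equiv0$ and $w\equiv1$ (when $k\neq1$). Hence I would argue entirely on \eqref{KS-S4}, reserving a direct computation for \eqref{KS-S3} in the borderline case $k=1$.

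\emph{Nonexistence for $d$ large ($k>1$).} First invoke the standard a priori bound for subcritical Neumann problems: any nonnegative solution $w$ of \eqref{KS-S4} with $d\geq1$ satisfies $\|w\|_{L^\infty(\Omega)}\leq C_*$ with $C_*=C_*(n,k,\Omega)$ — one obtains $\|w\|_{L^1}\leq|\Omega|$ from Jensen's inequality applied to $\int_\Omega w^{k}=\int_\Omega w$, then bootstraps via elliptic regularity using $1<k<\frac{n+2}{n-2}$. Setting $\bar w=\frac1{|\Omega|}\int_\Omega w\,dx$, multiply \eqref{KS-S4} by $w-\bar w$ and integrate by parts; the Neumann condition and $\int_\Omega(w-\bar w)\,dx=0$ give
\[ d\int_\Omega|\nabla w|^2\,dx+\int_\Omega(w-\bar w)^2\,dx=\int_\Omega\big(w^{k}-\bar w^{k}\big)(w-\bar w)\,dx\leq kC_*^{\,k-1}\int_\Omega(w-\bar w)^2\,dx. \]
Together with the Poincar\'e-Wirtinger inequality $\int_\Omega(w-\bar w)^2\,dx\leq C_P\int_\Omega|\nabla w|^2\,dx$ this forces $\nabla w\equiv0$ as soon as $d>d_1:=\max\{1,kC_*^{\,k-1}C_P\}$, so only the constants $0$ and $1$ remain for $d>d_1$.

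\emph{Existence for $d$ small ($k>1$).} Use the variational structure of \eqref{KS-S4}: on $H^1(\Omega)$ consider
\[ J_d(w)=\frac d2\int_\Omega|\nabla w|^2\,dx+\frac12\int_\Omega w^2\,dx-\frac1{k+1}\int_\Omega(w_+)^{k+1}\,dx. \]
Since $k+1<\frac{2n}{n-2}$ when $n\geq3$ and there is no growth restriction when $n\leq2$, the embedding $H^1(\Omega)\hookrightarrow L^{k+1}(\Omega)$ is compact, so $J_d$ satisfies the Palais-Smale condition; moreover $k+1>2$ makes $w=0$ a strict local minimum and $J_d(t\psi)\to-\infty$ along any $\psi$ with $\psi_+\not\equiv0$, so the mountain-pass theorem produces a critical point $w_d$ at level $c_d>0$. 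Testing $J_d'(w_d)=0$ against the negative part of $w_d$ forces it to vanish, and the strong maximum principle with Hopf's lemma (valid because $c_d>0$ excludes $w_d\equiv0$) then gives $w_d>0$ on $\bar\Omega$, so $w_d$ solves \eqref{KS-S4}. To see $w_d$ is non-constant I would compare energies: $J_d(1)=|\Omega|\,\frac{k-1}{2(k+1)}$ is a fixed positive number, whereas inserting $\psi_d(x)=\chi(x)\,U\!\big((x-x_0)/\sqrt d\,\big)$ — with $x_0\in\Omega$ an interior point, $\chi$ a fixed cutoff supported near $x_0$, and $U$ the positive radial ground state of $\Delta U-U+U^{k}=0$ on $\R^n$ — and rescaling gives $c_d\leq\max_{t>0}J_d(t\psi_d)=C(n,k)\,d^{n/2}\big(1+o(1)\big)\to0$ as $d\to0$ (the exponential decay of $U$ makes the cutoff error negligible). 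Thus $c_d<J_d(1)$ for all small $d$, so $w_d\notin\{0,1\}$; choosing $d_0\leq d_1$ small enough completes this part.

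\emph{The case $0<k\leq1$.} For $0<k<1$ the function $s\mapsto(s^{k}-s)/s=s^{k-1}-1$ is strictly decreasing on $(0,\infty)$, so the classical sublinear-uniqueness theorem of Brezis-Oswald type, applied to $-\Delta w=\frac1d(w^{k}-w)$ with Neumann data, gives at most one positive solution; since $w\equiv1$ is one, it is the unique positive solution (a nonnegative solution vanishing somewhere must be $\equiv0$ by the strong maximum principle and Hopf's lemma, and $w\equiv0$ violates the constraint $\int_\Omega v^{k}>0$ implicit in \eqref{KS-S3}). For $k=1$, \eqref{KS-S3} reads $d\Delta v+\big(\tfrac{m}{\int_\Omega v}-1\big)v=0$, a linear Neumann eigenvalue problem $-\Delta v=\lambda v$ with $\lambda=\frac1d\big(\tfrac{m}{\int_\Omega v}-1\big)$; a positive Neumann eigenvalue would force $\int_\Omega v=0$ by orthogonality to constants, which is impossible, and a negative number is not a Neumann eigenvalue, so $\lambda=0$ and $v\equiv m/|\Omega|$. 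The crux of the whole argument is the small-$d$ existence — securing compactness (exactly where subcriticality $k<\frac{n+2}{n-2}$ enters) and the concentrated-test-function estimate $c_d=O(d^{n/2})$ — together with the uniform a priori bound used in the large-$d$ nonexistence step when $n\geq3$; all of this is classical Lin-Ni-Takagi theory (see \cite{Lin-Ni1,Ni-T1,Ni-T2,Ni-T3} and the form quoted in \cite{YK-AAM-2017}).
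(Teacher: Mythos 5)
The paper does not actually prove this lemma: it is stated as a quoted classical result with the one-line attribution ``We cite the following well-known results (cf.\ \cite{Lin-Ni1, YK-AAM-2017}),'' so the only fair comparison is against the Lin--Ni--Takagi theory being cited. Your reconstruction follows exactly that classical route and is sound in outline: Jensen's inequality for the $L^1$ bound, the $(w-\bar w)$ energy identity plus Poincar\'e--Wirtinger for nonexistence at large $d$, the mountain-pass construction with the rescaled ground state $U(\cdot/\sqrt d)$ giving $c_d=O(d^{n/2})<J_d(1)=|\Omega|\tfrac{k-1}{2(k+1)}$ for small $d$, the Brezis--Oswald uniqueness for $0<k<1$, and the direct eigenvalue argument for $k=1$ (which is the right move, since for $k=1$ the local problem \eqref{KS-S4} degenerates to $\Delta w=0$ and the statement only makes sense for the nonlocal problem \eqref{KS-S3} --- a wrinkle the paper's own statement glosses over and you handle correctly). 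Your observation that \eqref{KS-S3} and \eqref{KS-S4} are equivalent up to the rescaling $w=\mu^{1/(k-1)}v$ is precisely the reduction the paper itself performs in its proof of Theorem \ref{th-ss}(a).

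The one step where your sketch understates the difficulty is the uniform a priori bound $\|w\|_{L^\infty}\le C_*$ for $d\ge1$. The result is true in the stated range and you are entitled to invoke it, but the parenthetical justification --- $L^1$ control from Jensen followed by ``bootstraps via elliptic regularity'' --- does not close for $n\ge3$ once $k\ge \frac{n}{n-2}$: starting from $w\in L^r$, $r<\frac{n}{n-2}$ (Brezis--Strauss, the paper's Lemma \ref{regularity}), the iteration $\frac{1}{q_{j+1}}=\frac{k}{q_j}-\frac{2}{n}$ only escapes to $L^\infty$ if $q_0>\frac{n(k-1)}{2}$, which fails for $\frac{n}{n-2}\le k<\frac{n+2}{n-2}$. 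In Lin--Ni--Takagi this bound is obtained instead by a blow-up/rescaling argument combined with the Gidas--Spruck Liouville theorem, and that is exactly where the full subcriticality $k<\frac{n+2}{n-2}$ is consumed. So either cite that a priori estimate as a black box (as the paper implicitly does for the whole lemma) or replace your bootstrap remark by the blow-up argument; as written, that single sentence is the only genuinely incomplete link in an otherwise faithful reconstruction.
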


Now we are in a position to prove Theorem \ref{th-ss}(a).

\medskip

\noindent {\it Proof of Theorem \ref{th-ss}(a)}.  Let $w$ be a solution of \eqref{KS-S4} with  $\int_\O wdx=m_0$. If $m_0=m$, then $w$ is a solution of \eqref{KS-S3} since $\int_\O wdx=\int_\O w^{k}dx$ by the integration of  \eqref{KS-S4}. Otherwise, if $m_0\ne m$, we define
$$\V=\frac{m}{m_0}w.$$
Then $\int_\O Vdx=m$ and from \eqref{KS-S4}, we may check that $V$ satisfies
\begin{eqnarray}\label{KS-S5}
\begin{cases}
\D \d \Delta \V -\V+\Big(\frac{m}{m_0}\Big)^{k-1}\V^{k}=0, &x\in\Omega, \\
\partial_{\nu} \V=0 &x\in \partial \Omega.
\end{cases}
\end{eqnarray}
On the other hand, integrating \eqref{KS-S5} yields
 that $\int_\O \V dx=\int_\O \big(\frac{m}{m_0}\big)^{k-1}\V^{k}dx=m$. Then
\begin{equation}\label{KS-S6}
\Big(\frac{m}{m_0}\Big)^{k-1}=\frac{m}{\int_\O \V^kdx}.
\end{equation}
With \eqref{KS-S6} and \eqref{KS-S5}, we see that $V=\frac{m}{m_0}w$ is a solution to \eqref{KS-S3}. With $k=\lambda \beta=(1-\alpha)\lambda$ and existence results in Lemma \ref{existence-w} for $w$, we get the existence of solutions to \eqref{KS-S3} and hence prove the first part of Theorem \ref{th-ss}(a). We proceed to prove that \eqref{KS-S3} has only constant solution if $0<(1-\alpha)\lambda\leq 1$ (namely $0<k\leq1$). Arguing by contradiction, we assume that there is a non-constant solution to \eqref{KS-S3} in the case of $0<k\leq 1$. Then $v$ is a solution of the following problem
\begin{eqnarray*}\label{KS-S3-1}
\begin{cases}
\D \d\Delta v -v+\xi v^{k}=0, &x\in\Omega, \\
\partial_{\nu} v=0 &x\in \partial \Omega
\end{cases}
\end{eqnarray*}
with $\xi=\frac{m}{\int_\O v^{k} dx}$. A direct calculation will show $w=\xi^{\frac{1}{k-1}} v$ is also a (non-constant) solution to \eqref{KS-S4}, which contradicts the results of Lemma \ref{existence-w}. This completes the proof of Theorem \ref{th-ss}(a).

\subsection{Motility with exponential decay}
Now we consider $\gamma(v)=e^{-\chi v} (\chi>0)$, which turns the stationary problem \eqref{KS-S2} to be
\begin{eqnarray}\label{KS-S3n}
\begin{cases}
\D\d\Delta v -v+\frac{m}{\int_\O e^{-\chi \beta v}  dx}e^{-\chi \beta v}=0, &x\in\Omega, \\
\partial_{\nu} v=0, &x\in \partial \Omega.
\end{cases}
\end{eqnarray}
With a change of variable
$$\tilde{v}=-\chi \beta v, \ \tilde{m}=-\chi \beta m,$$
we can transform \eqref{KS-S3n} into the following problem
\begin{eqnarray}\label{KS-S3n1}
\begin{cases}
\D\d\Delta \tilde{v} -\tilde{v}+\frac{\tilde{m}}{\int_\O e^{\tilde{v}}  dx}e^{\tilde{v}}=0, &x\in\Omega, \\
\partial_{\nu} \tilde{v}=0 &x\in \partial \Omega.
\end{cases}
\end{eqnarray}
The analysis of the nonlocal problem \eqref{KS-S3n1} is delicate and the geometry of domain $\Omega$ plays a role in determining the existence of solutions. It was proved in \cite{Senba-Suzuki-2000} that in two dimensions \eqref{KS-S3n1} only admits constant solution if $0<\tilde{m}\ll 1$  while admits non-constant solutions if $\tilde{m}>4\pi d$ and $\tilde{m}\ne 4k\pi d$ for $k=1,2,\cdots$. Similar results were obtained in \cite{Wang-Wei}. If $\Omega$ has some special geometry, non-constant solutions may also exist for $\tilde{m}<4\pi d$ (see \cite{Senba-Suzuki-2000}). When $\tilde{m}$ is sufficiently close to $4k\pi d(k=1,2,\cdots)$ in two dimensions, blow-up solutions may exist, (cf. \cite{delPino-Wei}), while in three or higher dimensions, blow-up solutions may exist for any $\tilde{m}>0$ (cf. \cite{Agudelo}). For the radial symmetric case, the following result (cf. \cite[Theorem 4]{Senba-Suzuki-2000}) gives a threshold of mass in two dimensions.

\begin{lemma}\label{existence-n}
Let $\O$ be a disc in $\R^2$ and $\tilde{v}(x)=\tilde{v}(|x|)$. Then the problem \eqref{KS-S3n1} admits a non-constant if $\tilde{m}>8\pi d$, while admits only constant solution $\tilde{v}=\tilde{m}{|\Omega|}$ if $m<8\pi d$.
\end{lemma}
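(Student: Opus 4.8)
The plan is to pass to the radial ODE on the disc, organise the existence/uniqueness dichotomy through the associated Moser--Trudinger-type functional, and read off the threshold $8\pi d$ as the mean-field concentration mass at an interior point; the sharp statement is exactly \cite[Theorem 4]{Senba-Suzuki-2000}, which I would ultimately cite for the optimal constant. Write $\O=B_R\subset\R^2$. A radial solution $\tilde v=\tilde v(r)$ of \eqref{KS-S3n1} solves the two-point problem
\[
d\Big(\tilde v''+\tfrac1r\,\tilde v'\Big)-\tilde v+\lambda e^{\tilde v}=0,\quad 0<r<R,\qquad \tilde v'(0)=\tilde v'(R)=0,\qquad \lambda:=\frac{\tilde m}{\int_\O e^{\tilde v}\,dx};
\]
integrating the equation gives $\tilde v>0$ and $\int_\O\tilde v\,dx=\tilde m$, so the constant $\tilde v\equiv\tilde m/|\O|$ is always a solution. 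In general all radial solutions are precisely the critical points on $H^1_{\mathrm{rad}}(\O)$ of
\[
J_{\tilde m}(\tilde v)=\frac d2\int_\O|\nabla\tilde v|^2\,dx+\frac12\int_\O\tilde v^2\,dx-\tilde m\log\!\Big(\int_\O e^{\tilde v}\,dx\Big).
\]

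For $\tilde m<8\pi d$ I would prove the constant is the unique radial solution. By the Moser--Trudinger inequality (Lemma \ref{moser}) the nonlocal term of $J_{\tilde m}$ is dominated by $\tfrac d2\|\nabla\tilde v\|_{L^2}^2$, with coercivity threshold exactly $\tilde m=8\pi d$ (the mass of a mean-field bubble at an interior point of the disc, which by radial symmetry must be the centre); hence for $\tilde m<8\pi d$ the functional $J_{\tilde m}$ is coercive and weakly lower semicontinuous on $H^1_{\mathrm{rad}}(\O)$ and attains a minimum. To upgrade this to uniqueness I would analyse the radial ODE: parametrising its solutions by $\tilde v(0)$ (unique for the initial-value problem) and computing the running mass $\tilde m=\lambda\int_\O e^{\tilde v}$ along the branch via a Pohozaev-type identity, one checks that the Neumann condition $\tilde v'(R)=0$ together with $\tilde m<8\pi d$ leaves only the trivial branch. (For small $\tilde m$ this is elementary: test the equation against $\tilde v-\tilde m/|\O|$ and combine the radial Poincar\'e inequality with an a priori $L^\infty$ bound; carrying the estimate all the way up to $8\pi d$ is where the finer ODE/bifurcation analysis of \cite{Senba-Suzuki-2000} is essential.)

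For $\tilde m>8\pi d$ the loss of coercivity is exactly what produces a non-constant solution. One checks that $\tilde v\equiv\tilde m/|\O|$ is still a strict local minimum of $J_{\tilde m}$ on $H^1_{\mathrm{rad}}(\O)$ (its second variation stays positive up to a threshold strictly larger than $8\pi d$), while along a family of radial functions concentrating at the centre of $B_R$ one has $J_{\tilde m}\to-\infty$ precisely when $\tilde m>8\pi d$; a mountain-pass argument on $H^1_{\mathrm{rad}}(\O)$ (supplemented by a Leray--Schauder degree computation for larger masses) then yields a critical point at strictly higher energy, hence non-constant, the Palais--Smale condition being available because the concentration masses of the two-dimensional mean-field equation are quantized. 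I expect the main obstacle to be exactly the sharpness of the threshold in both directions --- that nothing non-constant survives below $8\pi d$, and that $8\pi d$ is optimal from above --- which hinges on the interior bubble mass $8\pi d$ together with the radial mass-quantization for the Neumann mean-field equation; for that sharp statement I would invoke \cite[Theorem 4]{Senba-Suzuki-2000} rather than reprove the quantization from scratch, using the variational picture above only to frame the existence/uniqueness alternative.
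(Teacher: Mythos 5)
Your proposal ultimately rests on invoking \cite[Theorem 4]{Senba-Suzuki-2000} for the sharp threshold in both directions, which is exactly what the paper does: Lemma \ref{existence-n} is stated there as a direct quotation of that theorem (after the change of variables reducing \eqref{KS-S3n} to \eqref{KS-S3n1}), with no independent proof given. The variational and ODE scaffolding you sketch around the citation is plausible and consistent with how that result is proved in the literature, but it is not needed for, and does not differ in substance from, the paper's argument.
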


We remark for the radially symmetric domain $\Omega \subset \R^2$, if the solution is only required to be constant on the boundary (not necessarily radially symmetric), it was shown in \cite{WWY} that \eqref{KS-S3n1} only admits a unique constant solution.
\vspace{0.2cm}

\noindent {\it Proof of Theorem \ref{th-ss}(b)}. Noticing that $\tilde{m}=\chi(1-\alpha)m$, we obtain Theorem \ref{th-ss}(b) immediately as a consequence of Lemma \ref{existence-n}.


\section{Summary and discussion}
In this paper, we consider the parabolic-elliptic Keller-Segel system \eqref{KS} with $\tau=0$, where both cell diffusion rate $\gamma(v)$ and chemotactic coefficient $\phi(v)$ are functions of the signal density. The prototypical relation between $\gamma(v)$ and $\phi(v)$ was given by \eqref{dchi} in \cite{KS-1971-JTB2}. Although system \eqref{KS} has been proposed almost 50 years, the mathematical results are still very limited when both $\gamma(v)$ and $\phi(v)$ are non-constant. The existing results were developed only for the special case $\alpha=0$, namely $\phi(v)=-\gamma'(v)$, for which the system  \eqref{KS} was substantially reduced to \eqref{KS1}. By far no results have been available for the case $\alpha\ne0$ or general functions $\gamma(v)$ and $\phi(v)$. This paper takes a step forward to find suitable conditions on $\gamma(v)$ and $\phi(v)$ (see hypotheses (H1)-(H3) and \eqref{condition}) for the global boundedness of solutions in a smooth boundary domain of any dimension with Neumann boundary conditions (see Theorem \ref{BS}). These conditions include but have gone beyond the relation \eqref{dchi}. As an application, we give  examples for motility functions with algebraic and exponential decay and transform these conditions to the decay rates (see Theorem \ref{BSN}). By the results of Theorem \ref{BSN}, we obtain the global boundedness of solutions to \eqref{KS-N} with relation \eqref{dchi} for $\alpha\ne 0$ (see Theorem \ref{BSNN}).  Lastly we give some results on the existence/nonexistence of non-constant stationary solutions of \eqref{KS-N} with \eqref{dchi} $\gamma(v)$ with algebraic or exponential decay.

The results in the present paper with existing results in \cite{FujieJiang2020-1, Jin-Wang-PAMS, AY-Nonlinearity-2019} demonstrate that depending on the decay rate of non-constant motility function $\gamma(v)$ and $\phi(v)$, the relation between $\gamma(v)$ and $\phi(v)$ and space dimensions,  the Keller-Segel system \eqref{KS} with has very rich dynamics/patterns such as global boundedness, blow up, condensation  patterns and so on. Although some progresses have been made in this paper along with above-mentioned works,  there are many interesting questions left open.  First the asymptotic behavior of solutions is not explored in this paper, which will be an intricate problem given the wealthy behavior of stationary solutions as shown in Section 4.  Moreover the analytical tools tackling \eqref{KS} and its special case \eqref{KS1} may be very different. For example, the comparison principle is applicable to the simplified \eqref{KS1} by some technical treatment as done in \cite{FujieJiang2020-1}. However the method of \cite{FujieJiang2020-1} essentially depends on the structure of \eqref{KS1} and may not be directly applicable to the general case model \eqref{KS} for which comparison principle fails in general due to the cross diffusion.  Even for the simplified Keller-Segel system \eqref{KS1}, its understanding is far from being complete in spite of some progresses made recently in \cite{FujieJiang2020-1, Jin-Wang-PAMS}. For example in higher dimensions ($n\geq 3$), the global dynamics of solutions to \eqref{KS1} is unknown for exponential decay $\gamma(v)=\exp(-\chi v)$ or algebraic decay $\gamma(v)=\frac{\sigma}{v^k}$ with  $k\geq \frac{2}{n-2}$.  Turning to the Keller-Segel system \eqref{KS} with non-constant $\gamma(v)$ and $\phi(v)$, the present paper establishes the global boundedness of solutions for the parabolic-elliptic case model \eqref{KS-N} under conditions (H1)-(H3) with \eqref{condition}, which cover a wide range of motility functions $\gamma(v)$ and $\phi(v)$.  Whether these results can be extended to the parabolic-parabolic case model (i.e. \eqref{KS} with $\tau=1$) remains open.
In particular the global dynamics of solutions for exponentially decay motility functions in three or higher dimensions still remain poorly understood.
The hypotheses (H1)-(H3) plus \eqref{condition} prescribe sufficient conditions for the global boundedness of solutions. But to what extend these conditions are necessary is obscure. An immediate relevant question is whether solutions blow up if some (or all) of these conditions fail. The answer seems elusive since the global dynamics of solutions may critically depend on the decay rate of $\gamma(v)$ and $\phi(v)$ and space dimensions as can be seen from the specialized model \eqref{KS1}. The results of Theorem \ref{BSNN} apply to the case $\alpha<1$ only, while the results for $\alpha>1$ remains open. By the relation \eqref{dchi}, we see that $\alpha=1$ is a critical number determining the sign of $\phi(v)$. When $\alpha>1$, the Keller-Segel system will become a repulsive chemotaxis model if $\gamma'(v)<0$. This is opposite to the attractive case ($\alpha<1$) that we explore in this paper. Therefore it is worthwhile to study the case $\alpha>1$ for \eqref{KS}-\eqref{dchi} to examine how the dynamics will be different from the attractive case $\alpha<1$. Though the foregoing questions are by no means exhaustive ones open for the Keller-Segel system \eqref{KS}, the answer of these questions will  certainly enhance the understanding of the immensely rich dynamics encompassed in the Keller-Segel system \eqref{KS} with non-constant motility functions $\gamma(v)$ and $\phi(v)$.

\bigbreak
\noindent {\bf Acknowledgement}. The author is grateful to Prof. Benoit Perthame from Sorbonne Universit\'{e} for very inspiring discussions on the topic of this paper when he visited the Hong Kong Polytechnic University. Thanks are also given to Prof. Haiyang Jin from South China University of Technology and Prof. Wen Yang from Wuhan Institute of Physics and Mathematics of Chinese Academy of Sciences for many insightful discussions during the preparation of  this paper. This research is partially supported by the Hong Kong RGC GRF grant No. 15303019 (Project Q75G).

\end{document}